\newcommand*{\T}{\mathsf{T}}
\newcommand*{\paren}[1]{\left(#1\right)}
\newcommand*{\parsq}[1]{\left[#1\right]}
\newcommand*{\parbr}[1]{\left\{#1\right\}}
\DeclareMathOperator{\tr}{tr}
\DeclareMathOperator{\diag}{diag}
\DeclareMathOperator{\vecspan}{span}
\newcommand*{\averes}{\bar{\mathcal{R}}}
\newcommand*{\gdir}{\mathcal{G}_{\mathrm{dir}}}
\newcommand*{\gundir}{\mathcal{G}_{\mathrm{undir}}}
\newcommand*{\din}{\delta_{\mathrm{in}}}
\newcommand*{\dout}{\delta_{\mathrm{out}}}
\newcommand*{\tbcell}[1]{\begin{tabular}{l}#1\end{tabular}}
\newcommand*{\pe}{P_{\varepsilon}}
\newcommand*{\correction}[1]{\textcolor{black}{#1}}
\newcommand*{\correctionx}[1]{\textcolor{black}{#1}}
\crefname{ALC@unique}{Step}{Steps}
\crefname{problem}{Problem}{Problems}
\crefname{assumption}{Assumption}{Assumptions}
\title{Performance bound analysis of linear consensus algorithm on strongly connected graphs using effective resistance and reversiblization\thanks{Submitted to the editors DATE.
\funding{This work was supported by Japan Society for the Promotion of Science KAKENHI under 23K28369.}
}}
\author{Takumi Yonaiyama\thanks{Department of Mathematical Informatics, Graduate School of Information Science and Technology, The University of Tokyo (T.~Yonaiyama: \email{yonaiyama-takumi844@g.ecc.u-tokyo.ac.jp}, K.~Sato: \email{kazuhiro@mist.i.u-tokyo.ac.jp}).}
\and Kazuhiro Sato\footnotemark[2]}
\begin{document}

\maketitle
\begin{abstract}
We study the performance of the linear consensus algorithm on strongly connected directed graphs
using the linear quadratic (LQ) cost as a performance measure.
In particular, we derive bounds on the LQ cost by leveraging effective resistance and reversiblization. Our results extend previous analyses---which were limited to reversible cases---to the nonreversible setting. To facilitate this generalization, we introduce novel concepts, termed the back-and-forth path and the pivot node, which serve as effective alternatives to traditional techniques that require reversibility. Moreover, we apply our approach to Cayley graphs and random geometric graphs to estimate the LQ cost without the reversibility assumption. The proposed approach provides a framework that can be adapted to other contexts where reversibility is typically assumed.
\end{abstract}

\begin{keywords}
  Directed graph, Effective resistance, Linear consensus, Reversiblization
\end{keywords}

\section{Introduction}
\label{sec: introduction}
The analysis of multiagent networks has been applied to a variety of subjects, such as multi-robot systems \cite{robot}, wireless sensor networks \cite{WSN}, and large language models \cite{LLM}. One of the simplest and most fundamental algorithms in this context is linear consensus, also known as average consensus. The linear consensus algorithm is used when each agent has a scalar value and has to update its value toward the weighted average of initial values, relying only on limited and decentralized communication. Despite its simplicity, the linear consensus algorithm forms the foundation for many tasks in network coordination, such as formation control \cite{formation1,multivehicle2}, distributed optimization \cite{optim1}, and cooperative leader following \cite{leader0,leader1}.

The performance measurement of the linear consensus algorithm has been studied in \correction{various ways}. A linear consensus network can be understood as \correction{a linear} discrete-time dynamical system closely related to a Markov chain \cite{cayley}, making Markov chain theory a key analytical tool. In fact, the performance of the linear consensus algorithm has been analyzed by the speed of convergence using spectral analysis of the transition matrix of the associated Markov chain \cite{spectral1,spectral2}. On the other hand, recent studies often adopt approaches from a \correction{control-theoretic} point of view, such as linear quadratic (LQ) cost, which is classical in control. The LQ cost is defined as the \correction{sum of squared} $L_2$-norm of the difference between the state trajectory and the final state. Moreover, the LQ cost appears in the error estimation of noisy consensus, a variant of linear consensus with additive random noise. 
Motivated by these connections, several studies have employed the LQ cost or similar functions as performance measures \cite{cayley,aboutlqcost,mainp}. In these studies, various approaches are employed:
\begin{itemize}
  \item \textbf{Eigenvalue calculation}: \correction{Eigenvalues} are calculated for special graph types, including Cayley graphs, grid graphs, and random geometric graphs \cite{cayley,aboutlqcost}.
  \item \textbf{Effective resistance}: The concept, which bridges electrical networks with Markov chains \cite{randomwalks}, has been employed in several recent studies \cite{citeres1,satokazu}. Notably, previous research has estimated the LQ cost using the effective resistance \cite{mainp}, enabling the assessment of LQ cost based on the network's graph topology.
  \item \textbf{Hitting time of a Markov chain} (see \cite{revmarkov}): Recent research \cite{novelnoisy} has provided an exact formulation of a variant of the LQ cost by leveraging hitting times.
\end{itemize}

In this paper, we derive both upper and lower bounds on the LQ cost for nonreversible cases using effective resistance. This generalizes the estimation of the LQ cost for reversible linear consensus networks, as studied in \cite{mainp}, to nonreversible networks. A key advantage of nonreversible Markov chains is its applicability to directed networks. While the reversibility is a useful assumption, it does not hold in directed communication scenarios,
making the nonreversible setting highly relevant.
Thus, recent studies have highlighted the importance of
 nonreversible Markov chains \cite{nonrev1,nonrev2,nonrev3}.  
 \correction{A central technique in this line is} called \textit{reversiblization} of the transition matrix, which allows one to exploit results of studies on reversible Markov chains \cite{reversiblization}.

Our main contribution is the establishment of an upper bound on the LQ cost for nonreversible cases, and a lower bound under an assumption weaker than reversibility using the effective resistance of reversiblization of the underlying Markov chain. To obtain these bounds on the LQ cost, we use the following methods:
\begin{enumerate}
  \item Although the analogy between the effective resistance and Markov chain does not hold for nonreversible cases, we demonstrate that through the reversiblization of a nonreversible chain \correction{(see \Cref{subsec:reversiblization})}, the effective resistance can be applied to a nonreversible linear consensus system.
  \item We relate the reversiblization and the original nonreversible Markov chain by introducing new notions named the \textit{back-and-forth path} and \textit{pivot node}. These concepts serve as alternatives to the notion of $2$-\textit{fuzz} of an undirected graph \cite{hfuzz}, which is central in reversible cases, but cannot directly apply to nonreversible cases. \correction{These concepts, mainly used in \Cref{bounds_GPP}, demonstrate how graph-theoretic methods can be adapted to the analysis of nonreversible cases.}
\end{enumerate}
The differences between our approach to upper and lower bounds and those of previous studies are summarized in Table \ref{table:appr}. 
In all methods except our own, reversibility is typically assumed in the analysis of linear consensus, owing to the availability of numerous results pertaining to reversible Markov chains. 
 
\correction{
Furthermore, to clarify the behavior of our bounds, we introduce some specific applications and experiments, including Cayley graphs and random geometric graphs. Cayley graphs are highly symmetric and thus analytically tractable, while random geometric graphs model real-world communication networks. These graphs have been extensively studied in the literature (see \cite{rggs} and \cite{cayley}).}

\begin{table}[h]
  \caption{Approaches to the LQ cost}
  \label{table:appr}
  \centering
  \tabcolsep = 3pt
    \begin{tabular}{c|lll}
    \hline
    Method & Target class & Main tool & Key technique \\
    \hline\hline
    \tbcell{\cite{cayley}} & \tbcell{Cayley, grid,\\and random\\geometric graphs} & \tbcell{Calculation of\\eigenvalues} & \tbcell{Defining an appropriate\\trigonometric polynomial}\\
    \hline
    \tbcell{\cite{mainp}} & \tbcell{Reversible} & \tbcell{Effective resistance} & \tbcell{$2$-fuzz of a graph}\\
    \hline
    \tbcell{\cite{novelnoisy}} & \tbcell{Reversible} & \tbcell{Hitting time of\\a Markov chain} & \tbcell{Markov's inequality}\\
    \hline
    \tbcell{Proposed\\method} & \tbcell{\textbf{Nonreversible}} & \tbcell{Effective resistance} & \tbcell{``Back-and-forth path''\\and ``Pivot node''\\of a graph}\\
    \hline
    \end{tabular}
\end{table}

The remainder of this paper is organized as follows: In Section 2, we introduce the mathematical formulation of linear consensus algorithm, effective resistance, and the relationship between them. In Section 3, we  present our main result on the estimation of the LQ cost, along with its proofs. Section 4 demonstrates applications \correction{and numerical experiments}, and Section 5 concludes the paper.

\subsection*{Notations}
Let $\mathbb{R}$ be \correction{the} set of real numbers, $\mathbb{R}^n$ be \correction{the} set of $n$-dimensional real vectors, and $\mathbb{R}^{n\times m}$ be \correction{the} set of $n\times m$ real matrices. We denote \correction{the} $i$-th entry of $\bm{v}\in\mathbb{R}^n$ by $v_i$, and \correction{the} $(i,j)$ entry of $A\in\mathbb{R}^{n\times m}$ by $A_{ij}$. Let $\bm{e}_i\in\mathbb{R}^n$ be \correction{the} vector whose $i$-th entry is $1$ and other entries are $0$, and $\bm{1}\in\mathbb{R}^n$ be \correction{the} vector with all entries equal to $1$. We denote \correction{the} identity matrix by $I$ and \correctionx{the} zero matrix by $O$. For a vector $\bm{v}\in\mathbb{R}^n$, let $\bm{v}^\T$ be \correction{the} transpose of $\bm{v}$, $\vecspan\{\bm{v}\}$ be a vector space spanned by $\bm{v}$, and $\diag(\bm{v})\in\mathbb{R}^{n\times n}$ be \correction{the} diagonal matrix satisfying $\diag(\bm{v})_{ii}=v_i$ for all $i=1,2,\dots,n$ and $\diag(\bm{v})_{ij}=0$ if $i\neq j$. For a matrix $A$, let $A^\T$ be \correction{the} transpose of $A$, $\tr A$ be \correction{the} trace of $A$, $\ker A$ be \correction{the} kernel of $A$, and $A^{\dagger}$ be \correction{the} Moore-Penrose pseudoinverse of $A$. For two matrices $A$ and $B$, $A\geq B$ means $A_{ij}\geq B_{ij}$ for all $i$ and $j$. We denote a graph with a set of \correction{nodes} $V$ and a set of \correction{edges} $\mathcal{E}$ by $(V,\mathcal{E})$. For an undirected graph, we denote an edge between nodes $u$ and $v$ by $\{u,v\}$, while for a directed graph, we denote an edge from node $u$ to node $v$ by $(u,v)$.

We also introduce the following notation for central objects in this paper. The formal definitions are given in Section~\ref{sec:preliminaries}.

\begin{table}[h]
  \caption{Central objects in this paper}
  \label{table:objects}
  \centering
  \tabcolsep = 3pt
    \begin{tabular}{cl}
    \tbcell{$P$} & \tbcell{A consensus matrix}\\
    \hline
    \tbcell{\color{black}$\bm{\pi}$} & \tbcell{\color{black}The invariant measure of $P$}\\
    \hline
    \tbcell{\color{black}$\varPi$} & \tbcell{\color{black}The diagonal matrix defined as $\diag(\bm{\pi})$}\\
    \hline
    \tbcell{$P^*$} & \tbcell{\color{black}The matrix defined as $\varPi^{-1} P^\T\varPi$}\\
    \hline
    \tbcell{$C$} & \tbcell{A conductance matrix}\\
    \hline
    \tbcell{$J(P)$} & \tbcell{The linear quadratic cost of the consensus matrix $P$}\\
    \hline
    \tbcell{$J_{\mathrm{w}}(P)$} & \tbcell{The weighted version of the linear quadratic cost of\\the consensus matrix $P$}\\
    \hline
    \tbcell{$\gdir(P)$} & \tbcell{The associated \textbf{directed} graph of $P$ (the graph where\\the edge $(u,v)$ exists if and only if $P_{uv}>0$)}\\
    \hline
    \tbcell{$\mathcal{G}(P)$} & \tbcell{The associated \textbf{undirected} graph of $P$ (the graph where\\the edge $\{u,v\}$ exists if and only if $P_{uv}>0$ or $P_{vu}>0$}\\
    \hline
    \tbcell{$\mathcal{R}_{uv}(C)$} & \tbcell{The effective resistance between the node $u,v$\\in the network with the conductance matrix $C$}\\
    \hline
    \tbcell{$\mathcal{R}_{uv}(\mathcal{G}(P))$} & \tbcell{The effective resistance between the node $u,v$ in the graph $\mathcal{G}(P)$\\(setting the conductance of all edges to $1$)}\\
    \hline
    \tbcell{$\averes(C),\averes(\mathcal{G}(P))$} & \tbcell{The average effective resistance of the network\\with the conductance matrix $C$ or the graph $\mathcal{G}(P)$}\\
    \end{tabular}
\end{table}

\color{black}

\section{Preliminaries}
\label{sec:preliminaries}
\subsection{Definition of linear consensus algorithm}
We model a communication network of $n$ agents labeled $1$ to $n$. Each pair of agents can communicate according to the directed graph $\gdir=(V,\mathcal{E})$, where $V=\{1,2,\dots,n\}$ represents the set of \correction{agent labels}, and $(i,j)\in\mathcal{E}$ means that $i$ can get information from $j$. We call $\gdir$ a communication graph. In this paper, the information is represented by a real number.

The linear consensus algorithm ensures that all agents converge to the weighted average of their initial real numbers by exchanging information according to a predefined graph. In this algorithm, at each iteration, each agent transmits its current number to all connected agents and subsequently updates its own number to a convex combination of the received numbers using predetermined coefficients. More precisely, the linear consensus algorithm repeats the following update:
\[\bm{x}(t+1)=P\bm{x}(t),\]
indicating that
\begin{eqnarray}\label{fmlxt}
  \bm{x}(t)=P^t\bm{x}(0),
\end{eqnarray}
where $P\in\mathbb{R}^{n\times n}$ is a row stochastic matrix, namely, satisfying $P\bm{1}=\bm{1}$, and $\bm{x}(t)\in\mathbb{R}^n$ represents the number held by each agent at time $t$. In this paper, we refer to row stochastic matrices simply by \textit{stochastic matrices}.

The constraint that agents can only communicate according to the communication graph \correction{$\gdir$ is reflected in the placement} of nonzero elements in $P$, \correction{that is, the edge $(u,v)$ appears in $\gdir$ if and only if $P_{uv}\neq 0$. Furthermore, we define the \textit{undirected graph associated with matrix} $P$, $\mathcal{G}_{\mathrm{undir}}(P)=(V,\mathcal{E})$, letting $\{u,v\}\in\mathcal{E}$ if and only if $P_{uv}\neq 0$ or $P_{vu}\neq 0$. In this paper, we denote $\gundir=\mathcal{G}$ for simplicity.}

Throughout this paper, we assume two properties: $P$ is \textit{irreducible}, and the diagonal elements of $P$ are positive, where $P$ is \textit{irreducible} if and only if $\gdir(P)$ is strongly connected. The positive diagonal assumption may appear to be strong, but it is natural for each agent to use its current state for each iteration, and easy to implement since it does not require any additional communication. In addition, this assumption leads to some useful properties, such as \textit{aperiodicity} of $P$, which means that the greatest common divisor of the lengths of all cycles in $\gdir(P)$ is $1$. Aperiodicity is a desirable property and is often enforced by imposing additional assumptions. For example, the PageRank algorithm calculates the stationary distribution of a Google matrix, which is arranged from the initial hyperlink matrix to make it aperiodic \cite{pagerank}.

Under these two assumptions, the Perron-Frobenius theorem \cite{perronfrobenius} implies that $P$ has the eigenvalue $1$ with the multiplicity $1$, the corresponding right eigenvector is $\bm{1}$, and corresponding left eigenvector is a strictly positive vector. We denote the \correction{left} eigenvector by \correction{$\bm{\pi}^\T$} normalized so that $\sum_u \pi_u=1$, and call \correction{$\bm{\pi}$} the \textit{invariant measure} of $P$. Moreover, as $t \to \infty$, $P^t$ converges to $\bm{1}\bm{\pi}^\T$. Because of (\ref{fmlxt}), we obtain $\lim_{t\to\infty}\bm{x}(t)=\bm{1}\bm{\pi}^\T\bm{x}(0)$. We also use \correction{the} diagonal matrix $\varPi:=\diag(\bm{\pi})$.

In this paper, we call a stochastic and irreducible matrix whose diagonal elements are strictly positive, a \textit{consensus matrix}.

\subsection{Performance measure of linear consensus algorithm}
\label{subsec:perf_lc}
As a performance measure, we use the linear quadratic (LQ) cost \cite{mainp}
\begin{eqnarray}\label{lqcost}
  J(P):=\frac{1}{n}\sum_{t\geq 0}\|P^t-\bm{1}\bm{\pi}^\T\|_{\mathrm{F}}^2=\frac{1}{n}\tr\left[\sum_{t\geq 0}(I-\bm{\pi}\bm{1}^\T)(P^\T)^t P^t(I-\bm{1}\bm{\pi}^\T)\right],
\end{eqnarray}
where $\|\cdot\|_\mathrm{F}$ means the Frobenius norm of a matrix.

The cost (\ref{lqcost}) is obtained by evaluating $\mathrm{E}\left[\sum_{t\geq 0}\|\bm{x}(t)-\bm{x}(\infty)\|^2\right]$, where $\mathrm{E}[\cdot]$ means the expected value, under the assumption that $\bm{x}(0)$ is a random vector with covariance $\mathrm{E}\left[\bm{x}(0)\bm{x}(0)^\T\right]=I$ \cite{aboutlqcost,mainp}. In addition, the cost (\ref{lqcost}) also appears in the noisy consensus \cite{aboutlqcost,mainp,novelnoisy}. The noisy consensus repeats the update
\[\bm{x}(t+1)=P\bm{x}(t)+\bm{n}(t),\]
where $\bm{n}(t)$ is an independent and identically distributed process with the average $\mathrm{E}\left[\bm{n}(t)\right]=\bm{0}$ and the covariance $\mathrm{E}\left[\bm{n}(t)\bm{n}(t)^\T\right]=I$. Assume that $\bm{x}(0)$ is a random vector with covariance $\mathrm{E}\left[\bm{x}(0)\bm{x}(0)^\T\right]=I$ and is not correlated with $\bm{n}(t)$. To measure the distance between $\bm{x}(t)$ and its weighted average $\bm{1}\bm{\pi}^\T\bm{x}(t)$, we define $\tilde{\bm{e}}(t):=(I-\bm{1}\bm{\pi}^\T)\bm{x}(t)$. Then, we can show that
\begin{eqnarray}\label{noisy1}
  \frac{1}{n}\lim_{t\to\infty}\mathrm{E}\parsq{\|\tilde{\bm{e}}\correctionx{(t)}\|^2}&=&J(P).
\end{eqnarray}

From the context of noisy consensus, we also use another variant \cite{novelnoisy}
\begin{eqnarray}\label{lqcost2}
  J_{\mathrm{w}}(P):=\tr\left[\sum_{t\geq 0}(I-\bm{\pi}\bm{1}^\T)(P^\T)^t\varPi P^t(I-\bm{1}\bm{\pi}^\T)\right],
\end{eqnarray}
which is obtained by substituting $\|\tilde{\bm{e}}\|^2$ in (\ref{noisy1}) \correctionx{with} $\sum_i \pi_i\tilde{e}_i^2$. \correction{Unlike} (\ref{lqcost}), which sums up the errors uniformly, (\ref{lqcost2}) sums up the errors with weights according to $\bm{\pi}$.

\subsection{Effective resistance}
We consider a resistor network as an undirected connected graph $\mathcal{G}=(V,\mathcal{E})$ \correction{where} each edge represents the resistor that connects both nodes of the edge. A resistor network with $n$ nodes is determined by assigning a matrix $C=(C_{ab})\in\mathbb{R}^{n\times n}$ whose element $C_{ab}$ \correction{represents the conductance} of the edge between $a$ and $b$ if $C_{ab}\neq 0$, or that $a$ and $b$ are not connected by an edge if $C_{ab}=0$. We call $C=(C_{ab})$ a \textit{conductance matrix} if $C$ is a symmetric, nonnegative and irreducible matrix.

We consider a voltage vector of the nodes $\bm{v}$, where $v_a$ represents the voltage of the node $a$, and a current vector $\bm{i}$, where $i_a$ represents the current flowing out of (or into when negative) the node $a$. By Ohm's law, the current from $a$ to $b$ through an edge $\{a,b\}$ is $C_{ab}(v_a-v_b)$. Therefore, by Kirchhoff's law, $i_a$ is determined by
\begin{eqnarray}
  i_a=\sum_{b\in V}C_{ab}(v_a-v_b)\quad
{\rm or}\quad
\bm{i}=L(C) \bm{v}, \label{kirch1}
\end{eqnarray}
where $L(C):=\diag(C\bm{1})-C$ is the Laplacian of $C$. Notice that the diagonal elements of $C$ are not concerned with $L(C)$.

Because the current injection and extraction are balanced, we can assume $\bm{i}^{\T}\bm{1}=0$. Under this condition, \correction{the following lemma is well known fact (e.g., Lemma 6.12 in \cite{FB-LNS}).}
\begin{lemma}\label{v-uniqueness}
  If $\bm{i}^{\T}\bm{1}=0$, then (\ref{kirch1}) has a unique solution for $\bm{v}$ up to the fundamental solution term $\bm{1}$. That is, for each current vector $\bm{i}$, the potential difference between nodes $a$ and $b$ is unique.
\end{lemma}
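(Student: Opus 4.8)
The plan is to exploit the standard linear-algebraic structure of the graph Laplacian $L(C)$. First I would observe that, since $C$ is symmetric, $L(C)=\diag(C\bm{1})-C$ is a symmetric matrix, so the orthogonal decomposition $\mathbb{R}^n=\ker L(C)\oplus\image L(C)$ holds with $\image L(C)=(\ker L(C))^\perp$. The whole statement then reduces to identifying the kernel of $L(C)$ precisely, after which existence and uniqueness follow formally.

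Next I would check the two inclusions for the kernel. A direct computation gives $L(C)\bm{1}=\diag(C\bm{1})\bm{1}-C\bm{1}=C\bm{1}-C\bm{1}=\bm{0}$, so $\vecspan\{\bm{1}\}\subseteq\ker L(C)$. For the reverse inclusion I would use the quadratic form, which for any $\bm{v}$ satisfies $\bm{v}^\T L(C)\bm{v}=\tfrac12\sum_{a,b}C_{ab}(v_a-v_b)^2\geq 0$ because $C$ is nonnegative. Since $L(C)$ is symmetric and positive semidefinite, the condition $\bm{v}\in\ker L(C)$ is equivalent to $\bm{v}^\T L(C)\bm{v}=0$, which forces $v_a=v_b$ for every pair with $C_{ab}>0$, i.e.\ along every edge of $\mathcal{G}$.

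The key step---and the only place where the hypotheses on $C$ are essential---is promoting this edgewise equality to global constancy of $\bm{v}$. Here I would invoke irreducibility of $C$: it means the graph $\mathcal{G}$ is connected, so any two nodes are joined by a path along which the entries of $\bm{v}$ cannot change, forcing $\bm{v}$ to be a constant multiple of $\bm{1}$. This yields $\ker L(C)=\vecspan\{\bm{1}\}$, and hence $\image L(C)=\{\bm{1}\}^\perp=\{\bm{w}\in\mathbb{R}^n : \bm{w}^\T\bm{1}=0\}$.

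Finally, the hypothesis $\bm{i}^\T\bm{1}=0$ says exactly that $\bm{i}\in\image L(C)$, so (\ref{kirch1}) admits at least one solution $\bm{v}$; and any two solutions differ by an element of $\ker L(C)=\vecspan\{\bm{1}\}$, i.e.\ by a multiple of $\bm{1}$. Since adding a multiple of $\bm{1}$ leaves every difference $v_a-v_b$ unchanged, the potential differences are uniquely determined, as claimed. I expect the main obstacle to be the connectivity argument of the third step, where irreducibility must be translated into a path argument; the remaining steps are routine manipulations of the Laplacian and its quadratic form.
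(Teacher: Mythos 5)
Your proof is correct and complete: the kernel characterization $\ker L(C)=\vecspan\{\bm{1}\}$ via the quadratic form $\bm{v}^\T L(C)\bm{v}=\tfrac12\sum_{a,b}C_{ab}(v_a-v_b)^2$ together with irreducibility (connectivity), followed by $\image L(C)=(\ker L(C))^\perp=\{\bm{1}\}^\perp$, is exactly the standard argument. The paper does not prove this lemma itself but cites it as a well-known fact (Lemma 6.12 of the referenced lecture notes), and the cited proof proceeds along the same lines as yours, so there is nothing to add.
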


Consider the situation \correction{where} a voltage source is connected between nodes $a,b$, and adjust the voltage so that a unit current flows through the source. In this setting, the \correction{current} vector $\bm{i}$ is $\bm{e}_a-\bm{e}_b$, and $v_a-v_b$, the potential difference between nodes $a,b$, is unique. Thus, we can define \textit{effective resistance} between $a,b$:
\begin{definition}[\cite{mainp}]
  Let $\mathcal{G}=(V,\mathcal{E})$ be an undirected graph with the conductance matrix $C$. The \textit{effective resistance} between nodes $a$ and $b$, denoted by $\mathcal{R}_{ab}(C)$, is defined by $v_a-v_b=(\bm{e}_a-\bm{e}_b)^\T\bm{v}$, where $\bm{v}$ is an arbitrary solution of (\ref{kirch1}) when $\bm{i}=\bm{e}_a-\bm{e}_b$.
\end{definition}

The effective resistance $\mathcal{R}_{ab}(C)$ can also be expressed as
\begin{eqnarray}
  \mathcal{R}_{ab}(C)=(\bm{e}_a-\bm{e}_b)^\T L(C)^{\dagger}(\bm{e}_a-\bm{e}_b),
\end{eqnarray}
because $\bm{v}=L(C)^\dagger\bm{i}$ \cite{kronred}.

In our paper, we also consider resistor networks whose resistors have \correction{unit conductance}. For such networks, we only have to indicate the unweighted graph $\mathcal{G}$, so we denote the effective resistance between $a,b$ for such networks by $\mathcal{R}_{ab}(\mathcal{G})$, which is the property determined only by the graph topology.

In addition, we define \[\averes(C):=\frac{1}{2n^2}\sum_{u,v\in V}\mathcal{R}_{uv}(C)\] as the average effective resistance and $\averes(\mathcal{G})$ is defined using $\mathcal{R}_{uv}(\mathcal{G})$ in the same way. $\averes(\mathcal{G})$ is also determined only by the graph topology.

\subsection{The relationships between linear consensus algorithm and effective resistance}
In this section, we summarize some relationships between \textit{reversible} consensus matrices and conductance matrices.

A consensus matrix $P$ can be treated as the transition matrix of a discrete time Markov chain with the stationary distribution $\bm{\pi}$. A Markov chain with the transition matrix $P$ is called \textit{reversible} if and only if $\pi_xP_{xy}=\pi_{y}P_{yx}$ holds for all $x,y$, that is, the matrix $\varPi P$ is symmetric. Analogously, we call a consensus matrix $P$ \textit{reversible} if and only if $\varPi P$ is symmetric. For reversible $P$, we can associate $P$ with a conductance matrix $C$:
\begin{lemma}
  Let $S_{\mathrm{cs}}\subseteq \mathbb{R}^{n\times n}$ be the set of reversible consensus matrices, and for $\alpha>0$, let \[S_{\alpha}=\{C\in\mathbb{R}^{n\times n}\mid C^\T =C,\ C\geq 0,\ C_{ii}> 0\ (\forall i),\ \bm{1}^\T C\bm{1}=\alpha,\ C\textrm{ is irreducible}\}\]be the set of conductance matrices whose sum of all elements is $\alpha$. Then, $\Phi_{\alpha}(P):=\alpha\varPi P$ is a bijection between $S_{\mathrm{cs}}$ and $S_{\alpha}$, and the inverse is $\Psi(C):=\diag(C\bm{1})^{-1}C$.
\end{lemma}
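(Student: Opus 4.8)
The plan is to show that $\Phi_\alpha$ and $\Psi$ are well-defined maps between $S_{\mathrm{cs}}$ and $S_\alpha$, and then that $\Psi\circ\Phi_\alpha$ and $\Phi_\alpha\circ\Psi$ are both the identity. The only genuinely nonlinear ingredient is that $\varPi$ is not fixed but built from the invariant measure of its argument, so the heart of the argument is to track how the invariant measure transforms under the two maps.

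First I would check $\Phi_\alpha(S_{\mathrm{cs}})\subseteq S_\alpha$. Write $C=\alpha\varPi P$ for $P\in S_{\mathrm{cs}}$. Symmetry of $C$ is exactly the reversibility hypothesis that $\varPi P$ is symmetric. Since $\varPi=\diag(\bm\pi)$ has strictly positive diagonal (Perron--Frobenius) and $P\geq O$, we get $C\geq O$, while $C_{ii}=\alpha\pi_iP_{ii}>0$ uses the positive-diagonal assumption on $P$. For the total mass, $\bm1^\T C\bm1=\alpha\bm1^\T\varPi P\bm1=\alpha\bm1^\T\varPi\bm1=\alpha\sum_u\pi_u=\alpha$, using $P\bm1=\bm1$ and the normalization of $\bm\pi$. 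Finally $C_{ij}=\alpha\pi_iP_{ij}$ vanishes iff $P_{ij}$ does, so $C$ and $P$ have the same off-diagonal support; irreducibility of $P$ then yields irreducibility of $C$.

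Next I would check $\Psi(S_\alpha)\subseteq S_{\mathrm{cs}}$. For $C\in S_\alpha$ set $\bm d=C\bm1$; since $C\geq O$ and $C_{ii}>0$ we have $d_i\geq C_{ii}>0$, so $\diag(\bm d)^{-1}$ exists and $P:=\diag(\bm d)^{-1}C$ is nonnegative with $P\bm1=\diag(\bm d)^{-1}\bm d=\bm1$, hence stochastic, with $P_{ii}=C_{ii}/d_i>0$; as $\diag(\bm d)^{-1}$ is a positive diagonal matrix, $P$ inherits the (symmetric) support of $C$, giving irreducibility. The decisive step is to identify the invariant measure: I claim it is $\bm\pi:=\bm d/\alpha$. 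Indeed $\varPi=\diag(\bm\pi)=\tfrac1\alpha\diag(\bm d)$, so $\varPi P=\tfrac1\alpha C$ is symmetric, which simultaneously proves reversibility and, on multiplying by $\bm1^\T$ and using $P\bm1=\bm1$, shows $\bm\pi^\T P=\bm\pi^\T$; the normalization $\sum_u\pi_u=\bm1^\T C\bm1/\alpha=1$ then pins down $\bm\pi$ as the invariant measure. Thus $P\in S_{\mathrm{cs}}$.

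The bijection follows by direct computation. For $P\in S_{\mathrm{cs}}$ and $C=\Phi_\alpha(P)$ we get $C\bm1=\alpha\varPi P\bm1=\alpha\bm\pi$, so $\diag(C\bm1)^{-1}=\tfrac1\alpha\varPi^{-1}$ and $\Psi(C)=\tfrac1\alpha\varPi^{-1}\cdot\alpha\varPi P=P$. Conversely, for $C\in S_\alpha$ and $P=\Psi(C)$, the invariant-measure computation above gives $\varPi=\tfrac1\alpha\diag(C\bm1)$, whence $\Phi_\alpha(P)=\alpha\varPi P=\diag(C\bm1)\cdot\diag(C\bm1)^{-1}C=C$. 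I expect the main obstacle to be the bookkeeping around the invariant measure: because $\varPi$ in the definition of $\Phi_\alpha$ is the invariant measure of $P$ and changes with $P$, the maps are not linear, and the whole argument hinges on correctly recovering $\bm\pi=C\bm1/\alpha$ from $C$ and invoking uniqueness of the Perron eigenvector to conclude it really is the invariant measure.
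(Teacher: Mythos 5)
Your proof is correct and complete: the verification that $\Phi_\alpha$ and $\Psi$ land in the right sets, the identification of the invariant measure of $\Psi(C)$ as $C\bm{1}/\alpha$ via symmetry of $\varPi P$ and Perron--Frobenius uniqueness, and the two composition checks are exactly the argument needed. The paper itself does not spell this out --- it only remarks that the proof is analogous to Section 3.1.2 of the cited reference --- so your write-up supplies the standard details that the paper defers, with no gaps.
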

The proof is analogous to \correction{the argument of Section 3.1.2 in} \cite{mainp}.

Let $\Phi(P):=\Phi_n(P)=n\varPi P$, where $n$ denotes the number of rows of $P$. Then, we can see a clear relationship between \correction{the random walk with the transition matrix $P$ and some properties in the electrical network with conductance matrix $\Phi(P)$ such as the effective resistance, the voltage and the current} (see \cite{randomwalks}). In the analogy of random walk, a matrix called Green matrix \cite{mainp} plays an important role. The Green matrix of consensus matrix $P$ is defined as
\begin{eqnarray}\label{defgp}
  G(P):=\sum_{t\geq 0}(P^t-\bm{1}\bm{\pi}^\T).
\end{eqnarray}
This matrix is also called the fundamental matrix \cite{revmarkov}. For a consensus matrix $P$, this matrix is well defined (see Chapter \correction{11.5} in \cite{greenconv}).
\begin{remark}
  Because $P^t-\bm{1}\correction{\bm{\pi}^\T}=(P-\bm{1}\bm{\pi})^t$ for $t\geq 1$, \eqref{defgp} can be reformulated as
  \begin{eqnarray}\label{defgpref}
    G(P)+\bm{1}\correction{\bm{\pi}^\T}=I+\sum_{t\geq 1}(P-\bm{1}\bm{\pi}^\T)^t.
  \end{eqnarray}
  Moreover, since $(I-A)^{-1}=\sum_{t\geq 0}A^t$ if the sum is well defined, we obtain \begin{eqnarray}\label{defgpfinal}
    G(P)+\bm{1}\correction{\bm{\pi}^\T}=(I-P+\bm{1}\bm{\pi}^\T)^{-1}.
  \end{eqnarray}
  A fundamental matrix often refers not to $G(P)$, but to the \correction{right-hand side} of \eqref{defgpref} or \eqref{defgpfinal} (e.g. \cite{greenconv,novelnoisy}).
\end{remark}

The reason for introducing the Green matrix is that for reversible consensus matrix $P$, the effective resistance of $C:=\Phi(P)$ can be written as
\[\mathcal{R}_{ab}(C)=\frac{1}{n}(\bm{e}_a-\bm{e}_b)^\T G(P)\varPi^{-1}(\bm{e}_a-\bm{e}_b),\]
which can be found in \cite{mainp}.
\color{black}
\correction{Such properties} in this section \correction{are} used to relate $J(P)$ and the effective resistance of the graph. In particular, $G(P)$ plays an important role in evaluating $J(P)$.

\section{Performance analysis of linear consensus algorithm using effective resistance}

In this section, we give upper and lower bounds for $J(P)$ using the concept of effective resistance. A bound for $J(P)$ has been known for reversible $P$ \cite{mainp,novelnoisy}, but not for nonreversible cases. To provide bounds for nonreversible $P$, we use a reversible matrix $P^*P$, where $P^*:=\varPi^{-1}P^\T\varPi$. Our result is a generalization of \cite{mainp}.

\subsection{Properties of $P^*P$}\label{subsec:reversiblization}
We show some properties of $P^*P$. For every consensus matrix $P$, $P^*P$ is a reversible consensus matrix with the same invariant measure as $P$, so it is called \correction{\textit{a multiple reversiblization}} of $P$ \cite{reversiblization}. When $P$ is a reversible consensus matrix, we have $P^*=\varPi^{-1}(\varPi P)^\T=\varPi^{-1}(\varPi P)=P$ and $P^*P=P^2$.

The following lemma plays an important role in this paper.
\begin{lemma}\label{tracekey}
  For every consensus matrix $P$ and nonnegative integer $t$, 
  \begin{eqnarray}\label{tracekeymain}
    \tr\paren{(P^*)^tP^t}\leq\tr \paren{(P^*P)^t}.
  \end{eqnarray}
\end{lemma}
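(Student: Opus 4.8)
The plan is to strip off the $\varPi$-weighting by a symmetrizing similarity, reducing the claim to a single clean singular-value inequality. Set $Q := \varPi^{1/2} P \varPi^{-1/2}$, which is well defined since $\bm{\pi}$ is strictly positive. A direct computation gives $\varPi^{1/2} P^* \varPi^{-1/2} = \varPi^{-1/2} P^\T \varPi^{1/2} = Q^\T$, so that $P = \varPi^{-1/2} Q \varPi^{1/2}$ and $P^* = \varPi^{-1/2} Q^\T \varPi^{1/2}$ are simultaneously conjugate to $Q$ and $Q^\T$. Consequently $(P^*)^t P^t = \varPi^{-1/2}(Q^\T)^t Q^t \varPi^{1/2}$ and $(P^* P)^t = \varPi^{-1/2}(Q^\T Q)^t \varPi^{1/2}$, and since the trace is invariant under similarity,
\begin{equation*}
\tr\paren{(P^*)^t P^t} = \tr\paren{(Q^\T)^t Q^t}, \qquad \tr\paren{(P^* P)^t} = \tr\paren{(Q^\T Q)^t}.
\end{equation*}
Thus it suffices to prove $\tr\paren{(Q^\T)^t Q^t} \le \tr\paren{(Q^\T Q)^t}$ for an arbitrary real matrix $Q$.

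Next I would rewrite both sides through the singular values $\sigma_1(Q) \ge \cdots \ge \sigma_n(Q) \ge 0$ of $Q$. The right-hand side equals $\tr\paren{(Q^\T Q)^t} = \sum_i \sigma_i(Q)^{2t}$, because the eigenvalues of $Q^\T Q$ are exactly the $\sigma_i(Q)^2$. The left-hand side is the squared Frobenius norm of $Q^t$, namely $\tr\paren{(Q^\T)^t Q^t} = \norm{Q^t}_{\mathrm{F}}^2 = \sum_j \sigma_j(Q^t)^2$. Hence the claim is equivalent to
\begin{equation*}
\sum_{j} \sigma_j(Q^t)^2 \le \sum_{i} \sigma_i(Q)^{2t},
\end{equation*}
which compares the singular values of the $t$-th power of $Q$ with the $t$-th powers of the singular values of $Q$.

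The key tool is the multiplicative majorization of singular values. By Horn's inequality, $\prod_{j=1}^{k}\sigma_j(AB) \le \prod_{j=1}^{k}\sigma_j(A)\sigma_j(B)$ for every $k$; iterating this with the factorization $Q^t = Q \cdots Q$ yields $\prod_{j=1}^k \sigma_j(Q^t) \le \prod_{j=1}^k \sigma_j(Q)^t$ for all $k$, i.e.\ the vector $(\sigma_j(Q^t))_j$ is log-majorized by $(\sigma_j(Q)^t)_j$. Log-majorization implies weak majorization (apply the increasing convex function $\exp$ to the weakly majorized vectors of logarithms), and for weakly majorized nonnegative vectors one has $\sum_i \phi(x_i) \le \sum_i \phi(y_i)$ for every increasing convex $\phi$. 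Taking $\phi(u) = u^2$ gives $\sum_j \sigma_j(Q^t)^2 \le \sum_i \paren{\sigma_i(Q)^t}^2 = \sum_i \sigma_i(Q)^{2t}$, which is exactly the desired inequality and completes the reduction.

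The main obstacle is this last singular-value step: the inequality is false at the level of individual singular values, and becomes true only after passing to partial products (Horn) and invoking the majorization--convexity principle, so the argument genuinely rests on this ordering-theoretic input rather than on any term-by-term estimate. Everything else---the similarity reduction and the identification of the two traces with sums of squared singular values---is routine. As an alternative that avoids citing Horn's inequality, one may derive the one-matrix form $\tr\paren{(Q^\T)^t Q^t} \le \tr\paren{(Q^\T Q)^t}$ directly from the Araki--Lieb--Thirring inequality applied to $\lvert Q\rvert$, at the cost of relying on an equally nontrivial external result.
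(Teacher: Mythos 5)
Your proof is correct and follows essentially the same route as the paper: the same symmetrization $Q=\varPi^{1/2}P\varPi^{-1/2}$, the same identifications $\tr\paren{(P^*)^tP^t}=\sum_i\sigma_i(Q^t)^2$ and $\tr\paren{(P^*P)^t}=\sum_i\sigma_i(Q)^{2t}$, and a singular-value comparison to finish. The one substantive difference is the last step, and there your version is the more careful one. The paper closes by quoting the term-by-term bound $\sigma_i(Q^t)\le\sigma_i(Q)^t$ (citing 9.H.2.a of Marshall--Olkin); as literally stated that inequality is false for $i\ge 2$ --- for instance $Q=\left(\begin{smallmatrix}0&2\\1/2&0\end{smallmatrix}\right)$ has $Q^2=I$, so $\sigma_2(Q^2)=1>\sigma_2(Q)^2=1/4$ --- and what that reference actually supplies is the log-majorization $\prod_{j\le k}\sigma_j(Q^t)\le\prod_{j\le k}\sigma_j(Q)^t$. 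You invoke exactly this (via iterated Horn) and then pass to the sum of squares through the majorization--convexity principle, which is the right way to complete the argument, since only the aggregate inequality $\sum_j\sigma_j(Q^t)^2\le\sum_j\sigma_j(Q)^{2t}$ is needed and the term-by-term version is neither needed nor true. (Minor point: when some $\sigma_j(Q)=0$ the passage through logarithms requires a limiting or direct product-to-power argument, which is standard.) Your alternative via Araki--Lieb--Thirring applied to $\lvert Q\rvert$ is equally valid but not more elementary.
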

\begin{proof}
  Let $Q:=\varPi^{1/2}P\varPi^{-1/2}$. Then, $\tr\paren{(P^*)^tP^t}=\tr\paren{(Q^t)^\T Q^t}=\sum_{i}\sigma_i^2(Q^t)$, where $\sigma_i(Q)$ is the $i$-th singular value of $Q$. Moreover, $\tr \paren{(P^*P)^t}=\tr \paren{(Q^\T Q)^t}=\sum_{i}\sigma_i^{2t}(Q)$, because $(Q^\T Q)^t$ is symmetric. Using $\sigma_i(Q^t)\leq \sigma_i^t(Q)$, as shown in 9.H.2.a of \cite{traceineq}, we obtain \eqref{tracekeymain}.
\end{proof}


The equality of \eqref{tracekeymain} holds not only for reversible matrices. For example, normal consensus matrices satisfy the property $P^*P=PP^*$. The class of normal consensus matrices includes Cayley graphs, and $J(P)$ on Cayley graphs have been studied (e.g. \cite{cayley}).
\begin{proposition} \label{Prop_normal}
  If $P$ is a normal consensus matrix, then $P^*P=PP^*$.
\end{proposition}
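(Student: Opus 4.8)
The plan is to reduce the assertion to the standard fact that a real normal matrix commutes with its transpose, by first showing that normality forces the invariant measure to be uniform. Interpreting ``normal'' in the usual sense $P^\T P=PP^\T$, I would begin by noting that $P-I$ is then also normal, so that $\norm{(P-I)\bm{v}}=\norm{(P-I)^\T\bm{v}}$ for every $\bm{v}$ (since $\bm{v}^\T(P-I)^\T(P-I)\bm{v}=\bm{v}^\T(P-I)(P-I)^\T\bm{v}$). In particular $\ker(P-I)=\ker(P^\T-I)$. Because $P$ is a consensus matrix we have $P\bm{1}=\bm{1}$, i.e.\ $\bm{1}\in\ker(P-I)$, and the kernel identity then yields $P^\T\bm{1}=\bm{1}$; that is, $P$ is doubly stochastic.

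The second step uses uniqueness of the invariant measure. Since $P$ is irreducible, the Perron--Frobenius theory recalled in \Cref{sec:preliminaries} guarantees that its invariant measure $\bm{\pi}$, normalized by $\sum_u\pi_u=1$, is the unique (up to scaling) left eigenvector for the eigenvalue $1$. The relation $\bm{1}^\T P=\bm{1}^\T$ obtained above shows that $\tfrac1n\bm{1}$ is such a vector, whence $\bm{\pi}=\tfrac1n\bm{1}$ and $\varPi=\tfrac1n I$.

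Finally, with $\varPi=\tfrac1n I$ the reversiblization collapses to the ordinary transpose: $P^*=\varPi^{-1}P^\T\varPi=nI\cdot P^\T\cdot\tfrac1n I=P^\T$. Substituting this into both products and invoking normality once more gives $P^*P=P^\T P=PP^\T=PP^*$, the desired identity.

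The only nontrivial point, and the step I would treat most carefully, is the implication $P\bm{1}=\bm{1}\Rightarrow P^\T\bm{1}=\bm{1}$ in the first step; everything afterward is a direct substitution. As an alternative one could argue in the $Q$-coordinates used in the proof of \Cref{tracekey}: with $Q:=\varPi^{1/2}P\varPi^{-1/2}$ one checks $Q^\T=\varPi^{1/2}P^*\varPi^{-1/2}$, so that $P^*P=PP^*$ is equivalent to $Q^\T Q=QQ^\T$. However, since conjugation by $\varPi^{1/2}$ need not preserve normality, verifying that $Q$ is normal still requires $\varPi\propto I$, so the uniform-measure argument remains the crux in either formulation.
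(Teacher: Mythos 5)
Your proof is correct and follows essentially the same route as the paper's: both arguments first show that normality forces $P$ to be doubly stochastic, hence $\bm{\pi}=\tfrac1n\bm{1}$ and $P^*=P^\T$, and then conclude from $P^\T P=PP^\T$. The only (minor) difference is how doubly stochasticity is obtained—you use the kernel identity $\ker(P-I)=\ker(P^\T-I)$ for normal matrices, while the paper multiplies $(I-P)(I-P^\T)=(I-P^\T)(I-P)$ by $\bm{1}$ and uses $\ker(I-P)=\vecspan\{\bm{1}\}$; both are valid.
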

\begin{proof}
  Because $P$ is normal, $(I-P)(I-P^\T)=(I-P^\T)(I-P)$. Multiplying $\bm{1}$ from the right, we obtain $(I-P)(I-P^\T)\bm{1}=\bm{0}$, which means $(I-P^\T)\bm{1}\in\ker (I-P)$. Therefore, there exists some real value $a$ that satisfies $(I-P^\T)\bm{1}=a\bm{1}$, because $\ker(I-P)=\vecspan\{\bm{1}\}$. Then, we multiply $\bm{1}^\T$ from the left and obtain $\bm{0}=an$. This means that $a=0$ and $(I-P^\T)\bm{1}=\bm{0}$. Thus, $P$ is doubly-stochastic and the invariant measure $\bm{\pi}$ is $\frac{1}{n}\bm{1}$. Therefore, we obtain $P^*=\correction{P^\T}$ and $P^*P=P^\T P=PP^\T=PP^*$.
\end{proof}

\begin{figure}
  \centering
  \includegraphics[keepaspectratio, scale=0.45]{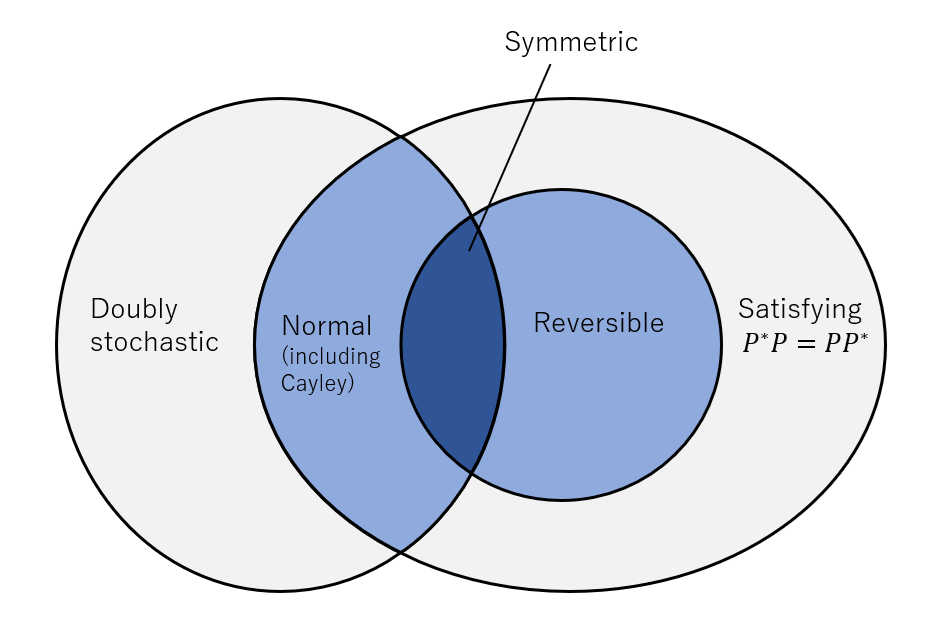}
  \caption{Inclusion properties among classes of consensus matrices.}
  \label{incl}
\end{figure}

Inclusion properties among classes of consensus matrices are shown in \Cref{incl}. Notice that there \correction{is} also a matrix $P$ which is \correction{neither} reversible, nor normal, but satisfies $P^*P=PP^*$, such as the following example \eqref{matrixexample}. Thus, the condition $P^*P=PP^*$ is a \correction{weaker} assumption than reversibility or normality.
\begin{eqnarray}\label{matrixexample}
  P=\frac{1}{2+\sqrt{10}}\begin{pmatrix}
    2 & 1 & -1+\sqrt{10} & 0 \\
    1 & 2 & 0 & -1+\sqrt{10} \\
    0 & 1+\sqrt{10} & 1 & 0 \\
    1+\sqrt{10} & 0 & 0 & 1
  \end{pmatrix}.
\end{eqnarray}

\subsection{A bound for $J(P)$ and $J_{\mathrm{w}}(P)$ using $\averes(C)$}

In this section, we prove the following \Cref{mainres1} that provides upper and lower bounds for $J(P)$ using the effective resistance of $\Phi(P^*P)$. \Cref{mainres1} is a generalization of Theorem 3.1 in \cite{mainp} for reversible $P$.
\begin{theorem}
  \label{mainres1}
  Let $P$ be a consensus matrix with invariant measure $\bm{\pi}$, and let $C_{P^*P}:=\Phi(P^*P)=nP^\T\mathit{\Pi}P$ be the conductance matrix. Then,
  \begin{eqnarray*}
    J(P)\leq\frac{\pi_{\max}^3n^2}{\pi_{\min}}\bar{\mathcal{R}}(C_{P^*P}),\quad
    J_{\mathrm{w}}(P)\leq\pi_{\max}^3n^3\averes(C_{P^*P}),
  \end{eqnarray*}
  where $\pi_{\min}$ and $\pi_{\max}$ are, respectively, the minimum and maximum entries of $\bm{\pi}$. Moreover, if $PP^*=P^*P$, then
  \begin{eqnarray*}
    J(P)\geq\frac{\pi_{\min}^3n^2}{\pi_{\max}}\bar{\mathcal{R}}(C_{P^*P}),\quad
    J_{\mathrm{w}}(P)\geq\pi_{\min}^3n^3\bar{\mathcal{R}}(C_{P^*P}).
  \end{eqnarray*}
\end{theorem}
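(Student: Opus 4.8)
The plan is to prove all four inequalities by first collapsing both $J(P)$ and $J_{\mathrm{w}}(P)$ onto the single scalar $\tr G(P^*P)$, and then sandwiching $\tr G(P^*P)$ against $\averes(C_{P^*P})$ from both sides. Throughout I would work with the similarity transform $Q:=\varPi^{1/2}P\varPi^{-1/2}$ already introduced in the proof of \Cref{tracekey}, together with the unit vector $\bm s:=\varPi^{1/2}\bm1$, which satisfies $Q\bm s=\bm s$, $\bm s^\T Q=\bm s^\T$ and $\bm s^\T\bm s=1$; thus $\bm s\bm s^\T$ is exactly the rank-one limit of $Q^t$.

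First I would rewrite the summands. From $P^t-\bm1\bm\pi^\T=\varPi^{-1/2}(Q^t-\bm s\bm s^\T)\varPi^{1/2}$ a direct expansion gives, for $J_{\mathrm{w}}$, the term $\tr[(I-\bm\pi\bm1^\T)(P^\T)^t\varPi P^t(I-\bm1\bm\pi^\T)]=\tr(\tilde N_t\varPi)$ with $\tilde N_t:=(Q^t)^\T Q^t-\bm s\bm s^\T\succeq0$, and for $J$ an analogous identity built from the core $N_t:=Q^t-\bm s\bm s^\T$. The clean invariant is $\tr\tilde N_t=\tr(N_t^\T N_t)=\tr((P^*)^tP^t)-1\ge0$. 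Because these cores are positive semidefinite and $\pi_{\min}I\preceq\varPi\preceq\pi_{\max}I$, the weighted traces can be sandwiched between $\pi_{\min}$ and $\pi_{\max}$ (respectively $\pi_{\min}/\pi_{\max}$ and $\pi_{\max}/\pi_{\min}$ for $J$, since there $\tr(N_t^\T\varPi^{-1}N_t\varPi)$ carries both $\varPi$ and $\varPi^{-1}$) times $\tr((P^*)^tP^t)-1$. Summing over $t$ and invoking \Cref{tracekey} to replace $\tr((P^*)^tP^t)$ by $\tr((P^*P)^t)$ turns $\sum_{t}(\tr((P^*P)^t)-1)$ into $\tr G(P^*P)$, yielding $J_{\mathrm{w}}(P)\le\pi_{\max}\tr G(P^*P)$ and $J(P)\le\frac{\pi_{\max}}{n\pi_{\min}}\tr G(P^*P)$. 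For the lower bounds the hypothesis $PP^*=P^*P$ makes $Q$ normal, so equality holds in \Cref{tracekey}, and the same sandwiching in the opposite direction gives $J_{\mathrm{w}}(P)\ge\pi_{\min}\tr G(P^*P)$ and $J(P)\ge\frac{\pi_{\min}}{n\pi_{\max}}\tr G(P^*P)$.

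It then remains to prove the single two-sided estimate $\pi_{\min}^2 n^3\averes(C_{P^*P})\le\tr G(P^*P)\le\pi_{\max}^2 n^3\averes(C_{P^*P})$, after which all four claims follow by substitution (the $\pi$- and $n$-powers from the two stages multiply to exactly the stated constants). To this end I would invoke the Green-matrix formula for the effective resistance of a reversible consensus matrix recalled in \Cref{sec:preliminaries}, which for $P^*P$ reads $\mathcal{R}_{ab}(C_{P^*P})=\frac1n(\bm e_a-\bm e_b)^\T G(P^*P)\varPi^{-1}(\bm e_a-\bm e_b)$. Summing over all pairs and writing $\hat G:=\varPi^{1/2}G(P^*P)\varPi^{-1/2}=(I-\varPi^{1/2}(P^*P)\varPi^{-1/2})^\dagger\succeq0$ and $\bm z:=\varPi^{-1/2}\bm1$, one gets $n^3\averes(C_{P^*P})=n\tr(\hat G\varPi^{-1})-\bm z^\T\hat G\bm z$, while $\tr G(P^*P)=\tr\hat G$. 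Diagonalizing $\hat G=\sum_{i}g_i\bm u_i\bm u_i^\T$ along its eigenvectors $\bm u_i\perp\bm s$ then reduces the estimate to a per-eigenvector bound: since $\tr G(P^*P)=\sum_i g_i$ and $n^3\averes(C_{P^*P})=\sum_i g_i f(\bm u_i)$ with $f(\bm u):=n\bm u^\T\varPi^{-1}\bm u-(\bm1^\T\varPi^{-1/2}\bm u)^2$, it suffices to show $\frac1{\pi_{\max}^2}\le f(\bm u)\le\frac1{\pi_{\min}^2}$ for every unit $\bm u\perp\bm s$.

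I expect this Rayleigh-quotient bound to be the main obstacle. Setting $\bm w:=\varPi^{-1/2}\bm u$, the constraint $\bm u\perp\bm s$ becomes the orthogonality $\bm\pi^\T\bm w=0$, and $f$ becomes $(n\|\bm w\|^2-(\bm1^\T\bm w)^2)/(\bm w^\T\varPi\bm w)$. The upper bound is easy: dropping $-(\bm1^\T\bm w)^2$ and using $\bm w^\T\varPi\bm w\ge\pi_{\min}\|\bm w\|^2$ gives $f\le n/\pi_{\min}\le 1/\pi_{\min}^2$ because $\pi_{\min}\le 1/n$. The lower bound is the delicate step, since the subtracted $(\bm1^\T\bm w)^2$ can be controlled only by genuinely exploiting $\bm\pi^\T\bm w=0$: I would bound the denominator by $\pi_{\max}\|\bm w\|^2$ and then, resolving $\bm1$ into its $\bm\pi$-component and the orthogonal remainder, show $(\bm1^\T\bm w)^2\le(n-\|\bm\pi\|^{-2})\|\bm w\|^2$, so that $f\ge 1/(\pi_{\max}\|\bm\pi\|^2)\ge 1/\pi_{\max}^2$ using $\|\bm\pi\|^2\le\pi_{\max}$. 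Assembling the two halves yields the crux estimate, and combining it with the reduction of the first part completes the proof.
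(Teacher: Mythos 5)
Your proposal is correct, and it reaches all four stated constants. The first stage --- collapsing $J(P)$ and $J_{\mathrm{w}}(P)$ onto $\sum_{t}\bigl(\tr((P^*)^tP^t)-1\bigr)$ via the positive semidefiniteness of the summands and $\pi_{\min}I\preceq\varPi\preceq\pi_{\max}I$, then invoking \Cref{tracekey} (with equality under $PP^*=P^*P$ because $Q$ is normal) to pass to $\tr G(P^*P)$ --- is essentially the paper's argument, just phrased through the symmetrized core $Q^t-\bm{s}\bm{s}^\T$ rather than through the paper's chain $\pi_{\max}^{-1}J_{\mathrm{w}}(P)\le\tr(\varPi^{-1}G_{\mathrm{w}})\le\pi_{\min}^{-1}J_{\mathrm{w}}(P)$ followed by $\frac{1}{n\pi_{\max}}J_{\mathrm{w}}(P)\le J(P)\le\frac{1}{n\pi_{\min}}J_{\mathrm{w}}(P)$. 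The second stage is where you genuinely depart: the paper routes through the weighted average effective resistance $\averes_{\mathrm{w}}$, using \Cref{rw} (the identity $\averes_{\mathrm{w}}(C)=\frac{1}{n}\tr G(P)$, whose proof is deferred to a citation) together with the trivial sandwich of \Cref{rwn}; there the cross term $\bm{\pi}^\T G\bm{1}$ vanishes because $G(P)\bm{1}=\bm{0}$, which is exactly why the weighted average is the natural intermediate object. You instead go directly from $\tr G(P^*P)$ to the unweighted $\averes(C_{P^*P})$, which forces you to keep the cross term $\bm{z}^\T\hat{G}\bm{z}$ and control it eigenvector by eigenvector via the Rayleigh-quotient bound $\pi_{\max}^{-2}\le f(\bm{u})\le\pi_{\min}^{-2}$ on $\bm{s}^\perp$; your projection argument $(\bm{1}^\T\bm{w})^2\le(n-\|\bm{\pi}\|^{-2})\|\bm{w}\|^2$ under $\bm{\pi}^\T\bm{w}=0$, combined with $\|\bm{\pi}\|^2\le\pi_{\max}$ and $\pi_{\min}\le 1/n$, does close this correctly and reproduces exactly the composite factor $n^3\pi^2$ of Lemmas \ref{rwn} and \ref{rw}. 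What your route buys is self-containedness (no appeal to the external proof behind \Cref{rw}) at the price of a longer spectral computation; the paper's route is shorter because introducing $\averes_{\mathrm{w}}$ makes the awkward cross term disappear for free.
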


To prove \Cref{mainres1}, in the same manner as the approach in \cite{mainp}, we define the \textit{weighted average effective resistance}
\begin{eqnarray}\label{weff}
  \averes_{\mathrm{w}}(C):=\frac{1}{2}\bm{\pi}^\T\mathcal{R}(C)\bm{\pi}=\frac{1}{2}\sum_{(u,v)\in V\times V}\mathcal{R}_{uv}(C)\pi_u\pi_v,
\end{eqnarray}
where $\bm{\pi}$ is the invariant measure of reversible $P$ and $C:=\Phi(P)$.

\begin{lemma}\label{rwn}
  For a reversible consensus matrix $P$, let $C:=\Phi(P)$ and $\bm{\pi}$ be the invariant measure of $P$. Then,
  \[n^2\pi_{\min}^2\averes(C)\leq \averes_{\mathrm{w}}(C)\leq n^2\pi_{\max}^2\averes(C).\]
\end{lemma}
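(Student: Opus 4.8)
The plan is to reduce the claimed two-sided bound to an elementary term-by-term comparison of the two sums appearing in the definitions. First I would substitute $\averes(C)=\frac{1}{2n^2}\sum_{u,v\in V}\mathcal{R}_{uv}(C)$ and $\averes_{\mathrm{w}}(C)=\frac{1}{2}\sum_{u,v\in V}\mathcal{R}_{uv}(C)\pi_u\pi_v$, so that after clearing the common factor $\frac{1}{2}$ the inequality to be shown becomes
\[\pi_{\min}^2\sum_{u,v\in V}\mathcal{R}_{uv}(C)\leq\sum_{u,v\in V}\mathcal{R}_{uv}(C)\pi_u\pi_v\leq\pi_{\max}^2\sum_{u,v\in V}\mathcal{R}_{uv}(C).\]

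The essential ingredient is the nonnegativity of the effective resistance. Using the formula $\mathcal{R}_{uv}(C)=(\bm{e}_u-\bm{e}_v)^\T L(C)^{\dagger}(\bm{e}_u-\bm{e}_v)$ recorded earlier, I would note that the Laplacian $L(C)$ of a connected network with nonnegative conductances is positive semidefinite, hence so is its Moore-Penrose pseudoinverse $L(C)^{\dagger}$; this gives $\mathcal{R}_{uv}(C)\geq 0$ for every pair $u,v$.

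With nonnegativity in hand, the bounds follow from the elementary estimate $\pi_{\min}^2\leq\pi_u\pi_v\leq\pi_{\max}^2$, valid for all $u,v$ since each entry of $\bm{\pi}$ lies in $[\pi_{\min},\pi_{\max}]$. Multiplying each such inequality by $\mathcal{R}_{uv}(C)\geq 0$ preserves its direction, and summing over all $(u,v)\in V\times V$ yields the displayed two-sided bound. Restoring the factor $\frac{1}{2}$ and rewriting $\frac{\pi_{\min}^2}{2}\sum_{u,v}\mathcal{R}_{uv}(C)=n^2\pi_{\min}^2\averes(C)$, and analogously for the upper bound, gives exactly $n^2\pi_{\min}^2\averes(C)\leq\averes_{\mathrm{w}}(C)\leq n^2\pi_{\max}^2\averes(C)$.

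There is no genuine obstacle here: the statement is a direct consequence of the positivity of the weights $\pi_u\pi_v$ and the nonnegativity of effective resistance, so the core of the argument is a routine term-by-term estimate. The only point requiring any care is confirming $\mathcal{R}_{uv}(C)\geq 0$, which is immediate from the positive semidefiniteness of $L(C)^{\dagger}$; once that is noted the remainder is purely mechanical.
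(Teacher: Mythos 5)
Your proof is correct and follows essentially the same route as the paper: a term-by-term comparison using $\pi_{\min}^2\leq\pi_u\pi_v\leq\pi_{\max}^2$ together with the nonnegativity of each $\mathcal{R}_{uv}(C)$ (which the paper leaves implicit but you justify via positive semidefiniteness of $L(C)^{\dagger}$).
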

\begin{proof}
  Because $\pi_{\min}\leq \pi_u\leq \pi_{\max}$,
  \begin{eqnarray*}
    \averes_{\mathrm{w}}(C)\leq \frac{1}{2}\pi_{\max}^2\sum_{(u,v)\in V\times V}\mathcal{R}_{uv}(C)=n^2\pi_{\max}^2\averes(C),
  \end{eqnarray*}
  and
  \begin{eqnarray*}
    \averes_{\mathrm{w}}(C)\geq \frac{1}{2}\pi_{\min}^2\sum_{(u,v)\in V\times V}\mathcal{R}_{uv}(C)=n^2\pi_{\min}^2\averes(C).
  \end{eqnarray*}
\end{proof}

\begin{lemma}\label{rw}
  For a reversible consensus matrix $P$, let $C:=\Phi(P)$. Then,
  \[\averes_{\mathrm{w}}(C)=\frac{1}{n}\tr G(P),\]
  where $G(P)$ is the Green matrix of $P$ defined in \eqref{defgp}.
\end{lemma}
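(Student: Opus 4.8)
The plan is to substitute the closed form of the effective resistance for reversible $P$ recalled just above the statement, namely $\mathcal{R}_{ab}(C)=\frac{1}{n}(\bm{e}_a-\bm{e}_b)^\T G(P)\varPi^{-1}(\bm{e}_a-\bm{e}_b)$ from \cite{mainp}, directly into the definition $\averes_{\mathrm{w}}(C)=\frac{1}{2}\sum_{(u,v)\in V\times V}\mathcal{R}_{uv}(C)\pi_u\pi_v$, and then collapse the resulting double sum to $\tr G(P)$ using two elementary identities for $G(P)$ and $\bm{\pi}$. Writing $M:=G(P)\varPi^{-1}$, I would first expand the quadratic form as $(\bm{e}_u-\bm{e}_v)^\T M(\bm{e}_u-\bm{e}_v)=M_{uu}-M_{uv}-M_{vu}+M_{vv}$, so that
\[
\averes_{\mathrm{w}}(C)=\frac{1}{2n}\sum_{u,v}\pi_u\pi_v\paren{M_{uu}-M_{uv}-M_{vu}+M_{vv}}.
\]

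Next I would split this into four sums. Using $\sum_v\pi_v=1$, each of the two diagonal sums collapses to $\sum_u\pi_u M_{uu}$, while each of the two off-diagonal sums equals $\bm{\pi}^\T M\bm{\pi}$ (the $M_{vu}$ term equals its own transpose since it is a scalar). This gives
\[
\averes_{\mathrm{w}}(C)=\frac{1}{n}\paren{\sum_u\pi_u M_{uu}-\bm{\pi}^\T M\bm{\pi}}.
\]
The two identities that finish the proof are then: first, because $\varPi^{-1}$ is diagonal with entries $1/\pi_u$, we have $M_{uu}=G(P)_{uu}/\pi_u$, hence $\sum_u\pi_u M_{uu}=\sum_u G(P)_{uu}=\tr G(P)$, producing the claimed main term; second, the cross term vanishes, since $\varPi^{-1}\bm{\pi}=\bm{1}$ yields $\bm{\pi}^\T M\bm{\pi}=\bm{\pi}^\T G(P)\bm{1}$, and $G(P)\bm{1}=\sum_{t\geq 0}\paren{P^t\bm{1}-\bm{1}\paren{\bm{\pi}^\T\bm{1}}}=\bm{0}$ because $P$ is stochastic ($P^t\bm{1}=\bm{1}$) and $\bm{\pi}^\T\bm{1}=1$.

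The computation is essentially routine, so I do not expect a genuine obstacle; the only care needed is the bookkeeping of the four sums and verifying the two identities $G(P)\bm{1}=\bm{0}$ and $\varPi^{-1}\bm{\pi}=\bm{1}$. Note that reversibility is used only to license the resistance formula itself; once that formula is available, the reduction to $\frac{1}{n}\tr G(P)$ proceeds purely algebraically.
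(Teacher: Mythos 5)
Your proof is correct and follows essentially the same route as the paper, which simply defers to the proof of Lemma 5.8 in \cite{mainp} (with $P^2$ replaced by $P$); the key ingredient in both cases is the resistance formula $\mathcal{R}_{ab}(C)=\frac{1}{n}(\bm{e}_a-\bm{e}_b)^\T G(P)\varPi^{-1}(\bm{e}_a-\bm{e}_b)$ recalled in Section~2, after which the reduction to $\frac{1}{n}\tr G(P)$ is the routine algebra you carried out. Your version has the advantage of being self-contained, and the two identities you invoke ($\varPi^{-1}\bm{\pi}=\bm{1}$ and $G(P)\bm{1}=\bm{0}$, the latter holding termwise since $P^t\bm{1}=\bm{1}$ and $\bm{\pi}^\T\bm{1}=1$) are both valid.
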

\begin{proof}
  The similar statement appears in the proof of Lemma 5.8 of \cite{mainp}. By changing $P^2$ to $P$ in \cite{mainp}, we obtain the lemma.
\end{proof}


Using $\averes_{\mathrm{w}}(C)$, we prove \Cref{mainres1}.
\begin{proof}[Proof of \Cref{mainres1}]
  First, we show the upper bound of $J_{\mathrm{w}}(P)$ in \Cref{mainres1}. Let $G_{\mathrm{w}}:=\sum_{t\geq 0}(I-\bm{\pi}\bm{1}^\T)(P^\T)^t\mathit{\Pi}P^t(I-\bm{1}\bm{\pi}^\T)$. Because $G_{\mathrm{w}}$ is positive semidefinite, its diagonal elements are nonnegative. Therefore, the matrix $\varPi^{-1} G_{\mathrm{w}}$ satisfies
  \[\pi_{\max}^{-1}(G_{\mathrm{w}})_{ii}\leq (\varPi^{-1} G_{\mathrm{w}})_{ii}\leq \pi_{\min}^{-1}(G_{\mathrm{w}})_{ii}.\]
  By summation from $i=1$ to $n$, we obtain
  \begin{eqnarray}\label{jwpb1}
    \pi_{\max}^{-1}J_{\mathrm{w}}(P)\leq \tr\paren{\varPi^{-1}G_{\mathrm{w}}}\leq \pi_{\min}^{-1}J_{\mathrm{w}}(P),
  \end{eqnarray}
  because $\correction{J_{\mathrm{w}}(P)}=\tr G_{\mathrm{w}}$ by definition. Since
  \begin{eqnarray*}
  \tr\paren{\varPi^{-1}G_{\mathrm{w}}}=\tr\paren{\varPi^{-1}\sum_{t\geq 0}\paren{(P^\T)^t\varPi P^t-\bm{\pi}\bm{\pi}^\T}}=\tr\paren{\sum_{t\geq 0}\paren{(P^*)^tP^t-\bm{1}\bm{\pi}^\T}},
  \end{eqnarray*}
  \eqref{jwpb1} yields
  \begin{eqnarray}\label{jweval}
    \pi_{\max}^{-1}J_{\mathrm{w}}(P)\leq \tr\paren{\sum_{t\geq 0}\paren{(P^*)^tP^t-\bm{1}\bm{\pi}^\T}}\leq \pi_{\min}^{-1}J_{\mathrm{w}}(P).
  \end{eqnarray}
  By \Cref{tracekey}, we obtain
  \begin{eqnarray}\label{jweval2}
    J_{\mathrm{w}}(P) \leq\pi_{\max}\tr\paren{\sum_{t\geq 0}\paren{(P^*)^tP^t-\bm{1}\bm{\pi}^\T}}\leq\pi_{\max}\tr G(P^*P).
  \end{eqnarray}
  Since $P^*P$ is a reversible consensus matrix, Lemmas \ref{rwn} and \ref{rw} imply that
  \[J_{\mathrm{w}}(P)\leq n\pi_{\max}\averes_{\mathrm{w}}(C_{P^*P})\leq n^3\pi_{\max}^3\averes(C_{P^*P}).\]
  Thus, the upper bound of \Cref{mainres1} is obtained.

  Then, we show the lower bound of $J_{\mathrm{w}}(P)$ in \Cref{mainres1}. When $PP^*=P^*P$, we can rewrite (\ref{jweval}) as below:
  \begin{align}\label{jweval3}
    J_{\mathrm{w}}(P)\geq \pi_{\min}\tr\paren{\sum_{t\geq0}\paren{(P^*)^tP^t-\bm{1}\bm{\pi}^\T}}=\pi_{\min}\tr G(P^*P).
  \end{align}
  Lemmas \ref{rwn} and \ref{rw} imply that \[J_{\mathrm{w}}(P)\geq n\pi_{\min}\averes_{\mathrm{w}}(C_{P^*P})\geq n^3\pi_{\min}^3\averes(C_{P^*P}).\] Thus, we obtain the bound of $J_{\mathrm{w}}(P)$.

  Finally, we give a bound $J(P)$ by $J_{\mathrm{w}}(P)$. Comparing (\ref{lqcost}) and (\ref{lqcost2}), we obtain
  \begin{eqnarray}\label{compare12}
    \frac{1}{n\pi_{\max}}J_{\mathrm{w}}(P)\leq J(P)\leq\frac{1}{n\pi_{\min}}J_{\mathrm{w}}(P).
  \end{eqnarray}
  Applying \eqref{jweval2} and \eqref{jweval3} to \eqref{compare12}, the proof is completed.
\end{proof}



\subsection{A bound for $J(P)$ using the graph structure of network}
In this section, we establish bounds for $J(P)$ using the structure of \correction{associated undirected} graph $\mathcal{G}(P)$, the maximum and minimum elements of $\bm{\pi}$, and the minimum element of $P$. Notice that we do not consider the edge weights of $\mathcal{G}(P)$ and focus only on its graph topology of $\mathcal{G}(P)$.

Although \Cref{mainres1} provides bounds when all elements of $P$ are known, \Cref{mainres2} offers bounds even if complete information about the elements of $P$ and $\bm{\pi}$ is unavailable.

\begin{theorem}
  \label{mainres2}
  Let $P$ be a consensus matrix with invariant measure \correction{$\bm{\pi}$}, and let $\mathcal{G}(P)$ be the associated undirected graph with $P$. Then,
  \begin{eqnarray*}    J(P)\leq\frac{\pi_{\max}^3n}{p_{\min}^2\pi_{\min}^2}\averes(\mathcal{G}(P)),\quad
    J_{\mathrm{w}}(P)\leq\frac{\pi_{\max}^3n^2}{p_{\min}^2\pi_{\min}}\averes(\mathcal{G}(P)),
  \end{eqnarray*}
  where $\pi_{\min}$ and $\pi_{\max}$ are respectively the minimum and maximum entries of $\bm{\pi}$, $p_{\min}$ is the minimum nonzero entry of $P$, and $\averes(\mathcal{G}(P))$ is the average effective resistance of a network, associated with $\mathcal{G}(P)$, such that each edge has \correction{unit conductance}. Moreover, if $PP^*=P^*P$, then
  \begin{eqnarray*}
    J(P)\geq\frac{\pi_{\min}^3n}{p_{\max}^2f(\din)\pi_{\max}^2}\averes(\mathcal{G}(P)),\quad
    J_{\mathrm{w}}(P)\geq\frac{\pi_{\min}^3n^2}{p_{\max}^2f(\din)\pi_{\max}}\averes(\mathcal{G}(P)),
  \end{eqnarray*}
  where $p_{\max}$ is the maximum nonzero entry of $P$, and $\din$ is the maximum indegree (excluding self loops) of the associated directed graph $\gdir(P)$, and $f(\din):=4\din^2+2\din-2$.
\end{theorem}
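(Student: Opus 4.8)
The plan is to deduce both halves of \Cref{mainres2} from \Cref{mainres1} by comparing the average effective resistance $\averes(C_{P^*P})$ of the weighted network $C_{P^*P}=nP^\T\varPi P$ against the unweighted quantity $\averes(\mathcal G(P))$. Since \Cref{mainres1} already controls $J(P)$ and $J_{\mathrm w}(P)$ through $\averes(C_{P^*P})$, it suffices to establish two-sided comparisons of the form $\frac{1}{c_1}\mathcal R_{uv}(\mathcal G(P))\le\mathcal R_{uv}(C_{P^*P})\le\frac{1}{c_2}\mathcal R_{uv}(\mathcal G(P))$ for every pair $u,v$, and then average. The structural fact driving everything is that $C_{P^*P}$ and $\mathcal G(P)$ live on comparable graphs: because $(C_{P^*P})_{ab}=n\sum_k\pi_kP_{ka}P_{kb}$, the edge $\{a,b\}$ is present in $C_{P^*P}$ exactly when some \emph{pivot node} $k$ satisfies $P_{ka}>0$ and $P_{kb}>0$, i.e.\ when $a$ and $b$ share a common in-neighbour in $\gdir(P)$, and such a $k$ furnishes a length-two \emph{back-and-forth path} $a-k-b$ in $\mathcal G(P)$. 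Hence $\mathcal G(P)\subseteq\mathcal G(P^*P)\subseteq$ the $2$-fuzz of $\mathcal G(P)$.

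For the upper bounds I would argue by Rayleigh monotonicity. Using the positive diagonal of $P$, the choice of pivot $k=u$ gives $(C_{P^*P})_{uv}\ge nP_{uu}\pi_uP_{uv}\ge np_{\min}^2\pi_{\min}$ for every edge $\{u,v\}$ of $\mathcal G(P)$; thus $C_{P^*P}$ dominates the network on $\mathcal G(P)$ in which each edge has conductance $np_{\min}^2\pi_{\min}$. Monotonicity of effective resistance under increasing conductances then yields $\mathcal R_{uv}(C_{P^*P})\le\frac{1}{np_{\min}^2\pi_{\min}}\mathcal R_{uv}(\mathcal G(P))$, hence $\averes(C_{P^*P})\le\frac{1}{np_{\min}^2\pi_{\min}}\averes(\mathcal G(P))$. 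Substituting into the upper bounds of \Cref{mainres1} reproduces the claimed upper bounds for $J(P)$ and $J_{\mathrm w}(P)$.

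The lower bounds are the crux, and here I would pass through the Dirichlet form (equivalently the Loewner ordering of Laplacians). The identity $f^\T L(C_{P^*P})f=\frac{n}{2}\sum_k\pi_k\sum_{a,b}P_{ka}P_{kb}(f_a-f_b)^2$ lets me route each contribution through its pivot: the estimate $(f_a-f_b)^2\le 2(f_a-f_k)^2+2(f_k-f_b)^2$ collapses the double sum to $\le 2n\sum_{k,a}\pi_kP_{ka}(f_a-f_k)^2$, i.e.\ onto the genuine edges $\{k,a\}$ of $\mathcal G(P)$. Bounding the weights $\pi_kP_{ka}$ and counting, across each fixed edge of $\mathcal G(P)$, how many back-and-forth paths (indexed by pivots and their in-neighbourhoods) are superimposed yields a congestion estimate $L(C_{P^*P})\preceq A\,L(\mathcal G(P))$; inverting on $\mathbf 1^\perp$ gives $\mathcal R_{uv}(C_{P^*P})\ge\frac1A\mathcal R_{uv}(\mathcal G(P))$. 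Invoking the lower half of \Cref{mainres1}, which requires the hypothesis $PP^*=P^*P$, then converts this into the stated lower bounds.

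The main obstacle is pinning the congestion constant $A$ down to $p_{\max}^2f(\din)$ with $f(\din)=4\din^2+2\din-2$. The two factors of the maximum indegree arise from two independent counts: bounding a single weighted edge $(C_{P^*P})_{ab}=n\sum_k\pi_kP_{ka}P_{kb}$ costs one factor $\din$ (the number of common pivots $k$, each weight at most $p_{\max}^2$), while estimating how many distinct fuzz-edges $\{a,b\}$ are rerouted across a given edge $\{x,y\}$ of $\mathcal G(P)$—equivalently, how many back-and-forth paths share it—costs a second factor $\din$ governed by the degrees of $x$ and $y$. Making these two counts simultaneously tight, while correctly handling self-loops (excluded from $\din$) and the two orientations of each undirected edge, is the delicate bookkeeping that produces the exact polynomial $f(\din)$; this is precisely the step where the pivot-node and back-and-forth-path device substitutes for the $2$-fuzz machinery available only in the reversible setting.
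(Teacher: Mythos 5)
Your overall architecture is the paper's: reduce to \Cref{mainres1} and prove a two-sided comparison between $\averes(C_{P^*P})$ and $\averes(\mathcal{G}(P))$. The upper half is correct and essentially identical to the paper's (the paper passes through $\mathcal{G}(P^*P)$ with unit conductances and applies Rayleigh monotonicity twice, whereas you use the self-loop pivot to bound $(C_{P^*P})_{uv}\ge n\pi_{\min}p_{\min}^2$ directly on the edges of $\mathcal{G}(P)$ and invoke monotonicity once; both give $\averes(C_{P^*P})\le\frac{1}{n\pi_{\min}p_{\min}^2}\averes(\mathcal{G}(P))$).

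The gap is in the lower bound. The paper proves it by graph surgery (\Cref{bounds_GPP}: subdivide each new edge of $\mathcal{G}(P^*P)$ in series, short-circuit the intermediate node to the pivot, count parallel edges to get $\mathcal{R}_{uv}(\mathcal{G}(P^*P))\ge\frac{1}{4\dout-2}\mathcal{R}_{uv}(\mathcal{G}(P))$), combined with the entrywise bound $(C_{P^*P})_{uv}\le n\pi_{\max}(\din+1)p_{\max}^2$ of \Cref{simple_monotonicity}, and then uses $P^*P=PP^*$ a \emph{second} time in \Cref{bounds_cor} (applying the lemma to $P^*$, for which out-degrees become in-degrees) to turn $\dout$ into $\din$; the product $(4\din-2)(\din+1)=f(\din)$ is exactly the stated constant. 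Your Dirichlet-form route is sound in principle --- a Loewner ordering $L(C_{P^*P})\preceq A\,L(\mathcal{G}(P))$ does invert to $\mathcal{R}_{uv}(C_{P^*P})\ge\frac{1}{A}\mathcal{R}_{uv}(\mathcal{G}(P))$ --- but you never pin $A$ down, and the natural execution of your own chain does not give the stated constant. After symmetrizing and using $\sum_bP_{kb}=1$, your computation gives $A=4n\pi_{\max}p_{\max}$, and the theorem requires $A\le n\pi_{\max}p_{\max}^2f(\din)$, which fails whenever $p_{\max}f(\din)<4$ (e.g.\ $\din=1$ and $p_{\max}<1$); so the stated bound would not follow. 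If instead you bound $P_{ka}P_{kb}\le p_{\max}^2$ and count the partners $b$ of each pivot $k$, the factor you pick up is $\dout+1$ --- the pivot fans out to its \emph{out}-neighbours --- and your sketch offers no mechanism for converting this to a function of $\din$; that conversion is precisely where the hypothesis $P^*P=PP^*$ must be used beyond merely invoking the lower half of \Cref{mainres1}. The ``delicate bookkeeping'' you defer is therefore not a finishing touch but the actual content of \Cref{simple_monotonicity}, \Cref{bounds_GPP}, and \Cref{bounds_cor}, and as written the proposal establishes a lower bound with a different, incomparable constant rather than the one claimed.
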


By \Cref{mainres1}, it is enough for the proof to show the following lemma, which is a generalization of Lemma 5.9 in \cite{mainp}.
\begin{lemma}\label{mainlemma2}
  Let $P$ be a consensus matrix with invariant measure $\bm{\pi}$ and $C_{P^*P}:=\Phi(P^*P)=nP^\T\varPi P$. Then,
  \begin{eqnarray*}
    \averes(C_{P^*P})\leq \frac{1}{n\pi_{\min}p_{\min}^2}\averes(\mathcal{G}(P)).
  \end{eqnarray*}
  Moreover, if $PP^*=P^*P$, then
  \begin{eqnarray*}
    \averes(C_{P^*P})\geq \frac{1}{n\pi_{\max}f(\din) p_{\max}^2}\averes(\mathcal{G}(P)).
  \end{eqnarray*}
\end{lemma}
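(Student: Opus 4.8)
The plan is to prove both inequalities as per-pair comparisons between $\mathcal{R}_{ab}(C_{P^*P})$ and $\mathcal{R}_{ab}(\mathcal{G}(P))$ and then average over $(u,v)\in V\times V$, since $\averes$ is just the normalized sum of pairwise resistances. Throughout I write the conductances of the reversiblization explicitly as $(C_{P^*P})_{ab}=n\sum_{k}\pi_k P_{ka}P_{kb}$, and I use two standard facts about resistor networks: Rayleigh's monotonicity law (increasing any conductance, or adding an edge, never increases an effective resistance) and the variational characterization $\mathcal{R}_{ab}(C)=\sup_{f}(f_a-f_b)^2/\mathcal{Q}_C(f)$, where $\mathcal{Q}_C(f):=\sum_{u<v}C_{uv}(f_u-f_v)^2$ is the Dirichlet energy.

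For the upper bound I would exploit the positive diagonal of $P$. If $\{u,v\}$ is an edge of $\mathcal{G}(P)$ then $P_{uv}>0$ or $P_{vu}>0$; assuming the former, the single pivot $k=u$ already gives $(C_{P^*P})_{uv}\ge n\pi_u P_{uu}P_{uv}\ge n\pi_{\min}p_{\min}^2$, since $P_{uu}\ge p_{\min}$. Hence, edge for edge, $C_{P^*P}$ dominates $n\pi_{\min}p_{\min}^2$ times the unit-conductance network on $\mathcal{G}(P)$, while any further (shortcut) edges of $C_{P^*P}$ only help. Rayleigh monotonicity together with the scaling identity $\mathcal{R}_{ab}(\lambda C)=\lambda^{-1}\mathcal{R}_{ab}(C)$ then yields $\mathcal{R}_{ab}(C_{P^*P})\le \frac{1}{n\pi_{\min}p_{\min}^2}\mathcal{R}_{ab}(\mathcal{G}(P))$, and averaging gives the first claim; this direction uses neither normality nor $PP^*=P^*P$.

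The lower bound is the substantive direction, and the obstacle is exactly the phenomenon the paper's new vocabulary is meant to tame: $C_{P^*P}$ carries a conductance between nodes $a,b$ that need not be adjacent in $\mathcal{G}(P)$, whenever they share a common pivot node $k$ with $P_{ka},P_{kb}>0$. These shortcut edges are the nonreversible replacement for the $2$-fuzz, and they make $C_{P^*P}$ strictly better connected than $\mathcal{G}(P)$, so a lower bound on its resistances is not automatic. I would route each conductance contribution $n\pi_k P_{ka}P_{kb}$ along the back-and-forth path $a\to k\to b$, whose two legs $\{a,k\}$ and $\{k,b\}$ are genuine edges of $\mathcal{G}(P)$, and run a path (Poincar\'e) comparison of Dirichlet energies: splitting $(C_{P^*P})_{ab}$ over its pivots and using $(f_a-f_b)^2\le 2(f_a-f_k)^2+2(f_k-f_b)^2$, one bounds $\mathcal{Q}_{C_{P^*P}}(f)$ by a weighted sum of $(f_u-f_v)^2$ over edges of $\mathcal{G}(P)$. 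It then remains to control the congestion on each edge: every contribution carries conductance at most $n\pi_{\max}p_{\max}^2$, and the number of back-and-forth paths crossing a fixed edge is governed by the degrees at the pivot in $\gdir(P)$, so the congestion is at most $n\pi_{\max}f(\din)p_{\max}^2$. This gives $\mathcal{Q}_{C_{P^*P}}(f)\le n\pi_{\max}f(\din)p_{\max}^2\,\mathcal{Q}_{\mathcal{G}(P)}(f)$ for all $f$, and the variational characterization converts the energy inequality into $\mathcal{R}_{ab}(C_{P^*P})\ge \frac{1}{n\pi_{\max}f(\din)p_{\max}^2}\mathcal{R}_{ab}(\mathcal{G}(P))$; averaging finishes the proof. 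The hypothesis $PP^*=P^*P$ is carried here to match the application in \Cref{mainres1}, where it is what actually drives the lower bound on $J_{\mathrm{w}}(P)$; the resistance comparison itself does not appear to require it.

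I expect the congestion estimate yielding the precise polynomial $f(\din)=4\din^2+2\din-2$ to be the crux. The quadratic growth is natural: the shortcut edges incident to a node are indexed by a pivot together with a second neighbor of that pivot, so up to order $\din^2$ of them can meet at one place, and each length-two back-and-forth path must be charged to both of its legs (the source of the factors of $2$). Keeping these counts tight, rather than settling for the cruder constant a naive splitting gives, requires careful bookkeeping of how many triples $(a,b,k)$ send their path across a given edge of $\mathcal{G}(P)$. A convenient sanity check is the exact identity $\sum_b (C_{P^*P})_{ab}=n\pi_a$, which follows from $P^\T\bm{\pi}=\bm{\pi}$ and the row-stochasticity of $P$; it caps the total conductance leaving each node, both guiding the congestion bound and confirming that no node can accumulate more than order $\din^2$ shortcut edges of appreciable weight.
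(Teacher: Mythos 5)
Your upper bound is correct and is essentially the paper's argument: the single pivot $k=u$ (or $k=v$), available because the diagonal of $P$ is positive, gives $(C_{P^*P})_{uv}\ge n\pi_{\min}p_{\min}^2$ on every edge of $\mathcal{G}(P)$, and Rayleigh's monotonicity plus the scaling of resistance finishes it; this is exactly how the paper combines \Cref{simple_monotonicity} with the easy half of \Cref{bounds_GPP}. For the lower bound your route is genuinely different: you propose a single-pass comparison of Dirichlet forms with path congestion, whereas the paper does graph surgery --- series-split each new edge, short-circuit the midpoint to the pivot node, and count parallel edges --- applying Rayleigh's law at each step, so that $f(\din)$ arises as the product $(\din+1)(4\din-2)$: the first factor bounds the number of pivots contributing to a single conductance entry (\Cref{simple_monotonicity}), the second the number of back-and-forth paths through a fixed edge (\Cref{bounds_GPP}, then converted from $\dout$ to $\din$ by \Cref{bounds_cor}).

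The gap is precisely in the congestion count you defer. Routing the contribution $n\pi_k P_{ka}P_{kb}$ along $a\text{--}k\text{--}b$, the charge accumulating on a fixed edge $\{u,w\}$ comes from pivots $k\in\{u,w\}$ and is indexed by the \emph{out}-neighbors of the pivot, since $P_{ka},P_{kb}>0$ are out-edges of $k$ in $\gdir(P)$; carried out, your count yields a constant of order $\dout$, not $f(\din)$. On a strongly connected digraph $\dout$ is not controlled by any function of $\din$ (a hub with out-edges to all nodes, returned by a long directed cycle, has $\din=2$ but $\dout=n-1$), so the asserted bound $n\pi_{\max}f(\din)p_{\max}^2$ does not follow from ``degrees at the pivot,'' and your heuristic ``order $\din^2$ shortcut edges at a node'' in fact counts an in-neighbor pivot times an out-neighbor of that pivot, i.e.\ order $\din\,\dout$. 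Consequently your closing claim that the resistance comparison does not require $PP^*=P^*P$ is wrong for the constant stated in \Cref{mainlemma2}: the commutation is exactly what lets the paper apply the comparison to $P^*$, whose out-degrees are the in-degrees of $P$, while $\mathcal{G}(P^*P)=\mathcal{G}(PP^*)$, thereby trading $\dout$ for $\din$. To repair your proof, either prove the lower bound with $\dout$ and then pass to $P^*$ under the commutation hypothesis, or route using the representation $C_{PP^*}=n\varPi P\varPi^{-1}P^\T\varPi$ (whose pivots are common out-neighbors, hence counted by in-degrees) together with $C_{PP^*}=C_{P^*P}$; in either case you must still track separately the pivot multiplicity per entry (at most $\din+1$) and the number of paths per edge (at most $2\din-1$, doubled by the $(f_a-f_b)^2\le 2(f_a-f_k)^2+2(f_k-f_b)^2$ step) to recover $f(\din)=4\din^2+2\din-2$.
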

\color{black}

Since the associated graph of the consensus matrix $P$ can be directed, we evaluate $\averes(C_{P^*P})$ using the maximum indegree $\din$ of the graph nodes of $\gdir(P)$, unlike previous research \cite{mainp}.

We employ the undirected graph $\mathcal{G}(P)$ instead of the directed graph \correction{$\gdir(P)$}, because the following property is essential for proving \Cref{mainres2}.
\begin{lemma}[Rayleigh's monotonicity law]\label{rayleigh}
  Let $C$ and $C'$ be conductance matrices such that $C\leq C'$. Then, $\mathcal{R}(C)\geq \mathcal{R}(C')$.
\end{lemma}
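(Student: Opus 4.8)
The plan is to work directly with the matrix formula $\mathcal{R}_{ab}(C)=(\bm{e}_a-\bm{e}_b)^\T L(C)^\dagger(\bm{e}_a-\bm{e}_b)$ and to translate the entrywise hypothesis $C\leq C'$ into a statement about the Laplacians in the Loewner (positive-semidefinite) order. Since the map $C\mapsto L(C)=\diag(C\bm{1})-C$ is linear, $C\leq C'$ gives $L(C')-L(C)=L(C'-C)$, where $C'-C$ is a symmetric nonnegative matrix. A one-line computation shows $\bm{x}^\T L(C'-C)\bm{x}=\frac{1}{2}\sum_{u,v}(C'-C)_{uv}(x_u-x_v)^2\geq 0$, so $L(C')-L(C)$ is positive semidefinite, i.e.\ $L(C')\succeq L(C)$. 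It then suffices to prove, for every vector $\bm{w}$ orthogonal to $\bm{1}$, that $\bm{w}^\T L(C)^\dagger\bm{w}\geq\bm{w}^\T L(C')^\dagger\bm{w}$, since $\bm{w}=\bm{e}_a-\bm{e}_b$ satisfies $\bm{w}^\T\bm{1}=0$ and the asserted inequality $\mathcal{R}(C)\geq\mathcal{R}(C')$ is entrywise by the paper's matrix-inequality convention.

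The subtle point is that the Moore--Penrose pseudoinverse is \emph{not} antitone on the whole space, so I would first reduce to the common range of the two Laplacians. Because $C$ and $C'$ are irreducible, both graphs are connected, whence $\ker L(C)=\ker L(C')=\vecspan\{\bm{1}\}$ and, by symmetry, both Laplacians restrict to bijections of the subspace $W:=\{\bm{x}\mid\bm{x}^\T\bm{1}=0\}$ onto itself. Writing $\tilde{L}$ and $\tilde{L}'$ for the restrictions of $L(C)$ and $L(C')$ to $W$, both are positive definite, the restriction of $L(C)^\dagger$ to $W$ coincides with $\tilde{L}^{-1}$ (and likewise $L(C')^\dagger$ with $(\tilde{L}')^{-1}$), and the Loewner inequality $L(C')\succeq L(C)$ restricts to $\tilde{L}'\succeq\tilde{L}\succ 0$ on $W$.

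On positive-definite operators, inversion is antitone: $\tilde{L}'\succeq\tilde{L}\succ 0$ implies $\tilde{L}^{-1}\succeq(\tilde{L}')^{-1}$. Applying the resulting quadratic-form inequality to $\bm{w}=\bm{e}_a-\bm{e}_b\in W$ yields
\[\mathcal{R}_{ab}(C)=\bm{w}^\T\tilde{L}^{-1}\bm{w}\geq\bm{w}^\T(\tilde{L}')^{-1}\bm{w}=\mathcal{R}_{ab}(C'),\]
and since $a,b$ were arbitrary this is exactly the entrywise conclusion $\mathcal{R}(C)\geq\mathcal{R}(C')$.

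I expect the only genuine obstacles to be the two facts that must be invoked with care: (i) that both Laplacians have kernel exactly $\vecspan\{\bm{1}\}$, so that their pseudoinverses agree with the honest inverses on the common invariant subspace $W$ — this is precisely where irreducibility of the conductance matrices enters — and (ii) the antitone law for inversion of positive-definite operators, which can be dispatched in one line by a congruence (simultaneous-diagonalization) argument or simply cited as standard. As an independent cross-check one could instead argue via Thomson's energy principle: $\mathcal{R}_{ab}(C)$ equals the minimal dissipated energy $\frac{1}{2}\sum_{u,v}i_{uv}^2/C_{uv}$ over unit $a$-$b$ flows, and raising every conductance both enlarges the set of admissible finite-energy flows and lowers the energy of each fixed flow, so the minimum cannot increase; this gives the same inequality without recourse to matrices.
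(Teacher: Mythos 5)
Your proof is correct, but it takes a genuinely different route from the one the paper relies on: the paper does not prove \Cref{rayleigh} at all, instead deferring to \cite{randomwalks}, where the standard argument is exactly the Thomson/Dirichlet energy principle you mention only as a cross-check at the end (effective resistance as the minimal dissipated energy over unit flows, so that raising conductances can only lower the minimum). Your main argument is instead a self-contained matrix-analytic one: linearity of $C\mapsto L(C)$ plus the quadratic-form identity gives $L(C')\succeq L(C)$ in the Loewner order, irreducibility pins down $\ker L(C)=\ker L(C')=\vecspan\{\bm{1}\}$ so that both pseudoinverses agree with honest inverses on $W=\bm{1}^\perp$, and antitonicity of inversion on positive definite operators then yields the quadratic-form inequality at $\bm{w}=\bm{e}_a-\bm{e}_b$. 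You correctly flag and defuse the one real pitfall of this route---the Moore--Penrose pseudoinverse is not antitone without the reduction to the common invariant subspace $W$---which is where irreducibility is genuinely needed. What each approach buys: your Loewner-order proof meshes directly with the paper's formula $\mathcal{R}_{ab}(C)=(\bm{e}_a-\bm{e}_b)^\T L(C)^\dagger(\bm{e}_a-\bm{e}_b)$ and makes the lemma self-contained in finite-dimensional linear algebra, whereas the energy-minimization proof in the cited reference is more conceptual, avoids spectral facts entirely, and extends to settings (e.g., infinite networks) where the matrix formulation is unavailable.
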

In plain words, if the resistance of any edge increases, then the resistance between any pair of nodes also increases. The proof can be found in \cite{randomwalks}.

To prove \Cref{mainlemma2}, we first compare $\averes(C_{P^*P})$ and $\averes(\mathcal{G}(P^*P))$ using \Cref{rayleigh}, and then compare $\averes(\mathcal{G}(P^*P))$ and $\averes(\mathcal{G}(P))$. Recall that $\averes(\mathcal{G}(P^*P))$ \correction{depends only on} the graph topology of $\mathcal{G}(P^*P)$ and is independent of each element of $P^*P$. In contrast, $\averes(C_{P^*P})$ is affected by each element of $C_{P^*P}$, which can \correction{differ} to each other.

\begin{lemma}\label{simple_monotonicity}
Let $\mathcal{G}(P^*P)$ be the undirected graph associated with $P^*P$. Then, \begin{eqnarray}\label{simple_monotonicity_ineq}
    \frac{1}{n\pi_{\max}(\din+1) p_{\max}^2}\averes(\mathcal{G}(P^*P))\leq \averes(C_{P^*P})\leq \frac{1}{n\pi_{\min}p_{\min}^2}\averes(\mathcal{G}(P^*P)).
  \end{eqnarray}
\end{lemma}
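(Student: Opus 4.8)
The plan is to sandwich the conductance matrix $C_{P^*P}$ entrywise between two scalar multiples of the unit-conductance matrix of $\mathcal{G}(P^*P)$ and then invoke Rayleigh's monotonicity law (\Cref{rayleigh}). First I would write the off-diagonal entries explicitly. Since $C_{P^*P}=n\varPi(P^*P)=nP^\T\varPi P$, for $a\neq b$ we have
\begin{eqnarray*}
(C_{P^*P})_{ab}=n\sum_{k}\pi_k P_{ka}P_{kb}.
\end{eqnarray*}
This is positive exactly when some $k$ satisfies $P_{ka}>0$ and $P_{kb}>0$, which is precisely the condition for $\{a,b\}$ to be an edge of $\mathcal{G}(P^*P)$ (recall that $P^*P$ is reversible, so $(P^*P)_{ab}>0$ iff $(P^*P)_{ba}>0$). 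Hence the off-diagonal support of $C_{P^*P}$ coincides with the edge set of $\mathcal{G}(P^*P)$.

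Next, on each such edge I would bound the entry from both sides. For the lower bound, retaining a single positive summand gives $(C_{P^*P})_{ab}\geq n\pi_{\min}p_{\min}^2$. For the upper bound, I estimate each factor by $\pi_{\max}$ and $p_{\max}$ and count the nonzero summands: the number of $k$ with $P_{ka}>0$ is the indegree of $a$ in $\gdir(P)$ counted with the always-present self-loop (since $P_{aa}>0$), hence at most $\din+1$, giving $(C_{P^*P})_{ab}\leq n(\din+1)\pi_{\max}p_{\max}^2$. Writing $A$ for the unit-conductance matrix of $\mathcal{G}(P^*P)$, these two estimates say that $C_{P^*P}$ lies entrywise (off the diagonal) between $n\pi_{\min}p_{\min}^2 A$ and $n(\din+1)\pi_{\max}p_{\max}^2 A$.

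Finally I would apply \Cref{rayleigh} together with the scaling identity $\mathcal{R}(\lambda C)=\lambda^{-1}\mathcal{R}(C)$ (immediate from $L(\lambda C)^\dagger=\lambda^{-1}L(C)^\dagger$ for $\lambda>0$), and then average over all node pairs to pass from $\mathcal{R}$ to $\averes$ by linearity. The one point needing care---and the main obstacle---is that \Cref{rayleigh} compares full conductance matrices entrywise, whereas my estimates control only off-diagonal entries; since the effective resistance depends on $C$ solely through $L(C)=\diag(C\bm{1})-C$, which is insensitive to the diagonal of $C$, I can adjust the diagonals of the comparison matrices so that the entrywise inequalities hold as genuine conductance matrices without altering any resistance. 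With that justification, monotonicity yields
\begin{eqnarray*}
\mathcal{R}\paren{n(\din+1)\pi_{\max}p_{\max}^2 A}\leq \mathcal{R}(C_{P^*P})\leq \mathcal{R}\paren{n\pi_{\min}p_{\min}^2 A}
\end{eqnarray*}
entrywise, and averaging (using $\averes(A)=\averes(\mathcal{G}(P^*P))$) gives exactly \eqref{simple_monotonicity_ineq}.
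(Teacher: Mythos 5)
Your proposal is correct and follows essentially the same route as the paper: bound each nonzero off-diagonal entry of $C_{P^*P}=nP^\T\varPi P$ between $n\pi_{\min}p_{\min}^2$ and $n\pi_{\max}(\din+1)p_{\max}^2$ (the latter by counting at most $\din+1$ nonzero summands), then apply Rayleigh's monotonicity law and the scaling of effective resistance. Your extra remarks on the irrelevance of the diagonal entries and the explicit scaling identity are fine elaborations of details the paper leaves implicit.
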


\begin{proof}
We consider the range of the elements of $C_{P^*P}$. When $(C_{P^*P})_{uv}\neq 0$, there exists $w$ such that $P_{wu}$ and $P_{wv}$ are positive, and hence $(C_{P^*P})_{uv}=n\sum_w\pi_wP_{wu}P_{wv}\geq n\pi_{\min}p_{\min}^2$. Moreover, for any fixed $u$, the number of nonzero entries of $P_{wu}$  is at most $\din+1$. Therefore, we obtain $(C_{P^*P})_{uv}\leq n\pi_{\max}(\din+1) p_{\max}^2$. By applying \Cref{rayleigh} to $(C_{P^*P})_{uv}$, we obtain \eqref{simple_monotonicity_ineq}.
\end{proof}
\color{black}

Next, we evaluate $\averes(\mathcal{G}(P^*P))$ using $\averes(\mathcal{G}(P))$, both of which are determined by the positions of the nonzero elements in $P$. Because $(P^*P)_{uv}=\sum_w\pi_u^{-1}\pi_wP_{wu}P_{wv}$, the entry $(P^*P)_{uv}$ is positive if and only if there exists $w$ such that both $P_{wu}$ and $P_{wv}$ are positive. From a \correction{graph-theoretic} perspective, this means that $\mathcal{G}(P^*P)$ has an edge $\{u,v\}$ if and only if there exists a node $w$ with edges $(w,u)$ and $(w,v)$ in $\gdir(P)$. 

To compare the places of edges in $\mathcal{G}(P^*P)$ and $\mathcal{G}(P)$, we introduce the novel concepts of a \textit{back-and-forth path} and its \textit{pivot node}. A \textit{back-and-forth path} between $u$ and $v$ is a path which consists of $\{u,w\},\{w,v\}\in\mathcal{G}(P)$ where $(w,u)$ and $(w,v)$ are edges in $\gdir(P)$. We refer to $w$ as the \textit{pivot node} of the path (see \Cref{newedge}). When $\gdir(P)$ is symmetric, namely, $P_{ij}\neq 0$ if and only if $P_{ji}\neq 0$, back-and-forth paths reduce to paths of length $2$ in $\mathcal{G}(P)$. In such cases, including when $P$ is reversible, $\mathcal{G}(P^*P)$ has the same graph topology as $\mathcal{G}(P^2)$. In general, the graph $\mathcal{G}(P^h)$ is called $h$-fuzz, and previous research has shown the relationship between $\averes(\mathcal{G}(P))$ and $\averes(\mathcal{G}(P^h))$ \cite{hfuzz}. However, for nonreversible $P$, it is worthwhile to compare $\mathcal{G}(P)$ and $\mathcal{G}(P^*P)$. Notice that the concept of a pivot node is close to that of a common in-neighbor, but differs slightly. Here, we emphasize that both the pivot node and the back-and-forth are concepts on the undirected graph, although they are defined via conditions on the directed graph.

Using the concept of a back-and-forth path, we compare $\averes(\mathcal{G}(P^*P))$ and $\averes(\mathcal{G}(P))$ in the following lemma.
Before stating the lemma, we recall that throughout the paper we assume that all diagonal entries of $P$ are positive. 
This ensures that every node in $\gdir(P)$ has a self-loop, which is used in the proof below.

\begin{lemma}\label{bounds_GPP}
  Let $\mathcal{G}(P)$ and $\mathcal{G}(P^*P)$ be the associated undirected graph with $P$ and $P^*P$, respectively. Then,
  \begin{eqnarray*}
    \frac{1}{4\dout-2}\mathcal{R}_{uv}\paren{\mathcal{G}(P)}\leq \mathcal{R}_{uv}\paren{\mathcal{G}(P^*P)}\leq\mathcal{R}_{uv}\paren{\mathcal{G}(P)},
  \end{eqnarray*}
  where $\dout$ is the maximum outdegree (excluding self loops) of the graph nodes of $\gdir(P)$.
\end{lemma}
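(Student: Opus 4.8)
The plan is to prove the two inequalities separately by comparing the unit-conductance Laplacians $L(\mathcal{G}(P))$ and $L(\mathcal{G}(P^*P))$, and then to pass back to effective resistances through the identity $\mathcal{R}_{uv}(\mathcal{G})=(\bm{e}_u-\bm{e}_v)^\T L(\mathcal{G})^{\dagger}(\bm{e}_u-\bm{e}_v)$. For the upper bound I would first observe that every edge of $\mathcal{G}(P)$ is already an edge of $\mathcal{G}(P^*P)$. Indeed, if $\{u,v\}\in\mathcal{G}(P)$ then $P_{uv}\neq 0$ or $P_{vu}\neq 0$; say $(u,v)\in\gdir(P)$. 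Since every node carries a self-loop, $(u,u)\in\gdir(P)$, so $w=u$ is a pivot node of a back-and-forth path between $u$ and $v$, and hence $\{u,v\}\in\mathcal{G}(P^*P)$. Consequently the unit-conductance matrices satisfy $C(\mathcal{G}(P))\leq C(\mathcal{G}(P^*P))$ entrywise, and Rayleigh's monotonicity law (\Cref{rayleigh}) immediately gives $\mathcal{R}_{uv}(\mathcal{G}(P^*P))\leq\mathcal{R}_{uv}(\mathcal{G}(P))$.

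For the lower bound I would establish the quadratic-form domination
\[
\bm{x}^\T L(\mathcal{G}(P^*P))\bm{x}=\sum_{\{a,b\}\in\mathcal{G}(P^*P)}(x_a-x_b)^2\leq(4\dout-2)\sum_{\{p,q\}\in\mathcal{G}(P)}(x_p-x_q)^2=(4\dout-2)\,\bm{x}^\T L(\mathcal{G}(P))\bm{x}
\]
for every $\bm{x}\in\RR^n$. To this end I would assign to each edge $\{a,b\}$ of $\mathcal{G}(P^*P)$ a route in $\mathcal{G}(P)$: if $\{a,b\}$ is itself an edge of $\mathcal{G}(P)$, route it directly (using the trivial bound $(x_a-x_b)^2\leq (x_a-x_b)^2$); otherwise choose a pivot node $w$, so that $\{a,w\},\{w,b\}\in\mathcal{G}(P)$, and apply Cauchy--Schwarz in the form $(x_a-x_b)^2\leq 2(x_a-x_w)^2+2(x_w-x_b)^2$. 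Summing over all edges of $\mathcal{G}(P^*P)$, the coefficient accumulated on a fixed edge $\{x,y\}$ of $\mathcal{G}(P)$ equals $1$ (from the direct route of $\{x,y\}$ itself) plus twice the number of pivot-routed edges that use $\{x,y\}$ as one of their two half-edges.

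The crux is the congestion count. A pivot-routed edge that uses $\{x,y\}$ as a half-edge must have its pivot in $\{x,y\}$. If the pivot is $x$, then the edge is $\{y,z\}$ where $z$ is an out-neighbor of $x$ with $z\neq x$ (else the edge would be $\{x,y\}\in\mathcal{G}(P)$ and hence direct-routed) and $z\neq y$; since $y$ is itself an out-neighbor of $x$, there are at most $\dout-1$ such $z$, and symmetrically at most $\dout-1$ pivot-routed edges with pivot $y$. Thus the total coefficient on $\{x,y\}$ is at most $1+2(\dout-1)+2(\dout-1)\leq 4\dout-2$, which yields the displayed domination. Finally I would convert it into the resistance bound: because $P$ and $P^*P$ are irreducible, both $L(\mathcal{G}(P))$ and $L(\mathcal{G}(P^*P))$ are connected-graph Laplacians with kernel exactly $\vecspan\{\bm{1}\}$, so on $\bm{1}^{\perp}$ the domination $L(\mathcal{G}(P^*P))\preceq(4\dout-2)L(\mathcal{G}(P))$ inverts to $L(\mathcal{G}(P^*P))^{\dagger}\succeq(4\dout-2)^{-1}L(\mathcal{G}(P))^{\dagger}$; evaluating the quadratic form at $\bm{e}_u-\bm{e}_v\perp\bm{1}$ gives $\mathcal{R}_{uv}(\mathcal{G}(P^*P))\geq(4\dout-2)^{-1}\mathcal{R}_{uv}(\mathcal{G}(P))$.

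I expect the congestion count in the lower bound to be the main obstacle: one must route each extra edge of $\mathcal{G}(P^*P)$ through a length-two back-and-forth path while controlling how many such paths reuse a single edge of $\mathcal{G}(P)$. The self-loop assumption is precisely what makes the bookkeeping close — it guarantees the inclusion $\mathcal{G}(P)\subseteq\mathcal{G}(P^*P)$ so that genuine edges can be routed directly and thereby escape the factor-$2$ loss, and it is exactly this direct-routing refinement that pulls the congestion constant down from $4\dout$ to $4\dout-2$.
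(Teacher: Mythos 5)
Your proof is correct, and its combinatorial core---routing each new edge of $\mathcal{G}(P^*P)$ through a back-and-forth path and bounding the congestion on each edge of $\mathcal{G}(P)$ in terms of the out-degree of its pivot---is exactly the paper's. The upper bound is argued identically (self-loops give $\mathcal{E}\subseteq\mathcal{E}^*$, then \Cref{rayleigh}). For the lower bound the machinery differs: the paper performs electrical surgery in the style of the $2$-fuzz lemma, subdividing each new edge into two unit resistors (doubling its resistance), short-circuiting the intermediate node onto the pivot, and then comparing the resulting multigraph with $\mathcal{G}(P)$ by counting at most $\eta=2\dout-1$ parallel copies of each original edge, with Rayleigh's monotonicity invoked at every step. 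You instead compare Dirichlet forms directly, proving $L(\mathcal{G}(P^*P))\preceq(4\dout-2)\,L(\mathcal{G}(P))$ via $(x_a-x_b)^2\le 2(x_a-x_w)^2+2(x_w-x_b)^2$ together with the same congestion count, and then reverse the L\"owner inequality on $\bm{1}^{\perp}$; this is the Diaconis--Saloff-Coste comparison argument, and it in fact yields the marginally sharper constant $4\dout-3$, because the direct-routed original edge escapes the factor of $2$ that the paper's series subdivision imposes uniformly on $\eta$. The one step you should make explicit is the last: that $0\preceq A\preceq B$ with $\ker A=\ker B=\vecspan\{\bm{1}\}$ implies $B^{\dagger}\preceq A^{\dagger}$ on $\bm{1}^{\perp}$; equivalently, you can bypass pseudoinverses altogether by invoking the Dirichlet variational principle $\mathcal{R}_{uv}(\mathcal{G})^{-1}=\min\{\bm{x}^\T L(\mathcal{G})\bm{x}\mid x_u-x_v=1\}$, under which your quadratic-form domination translates immediately into the stated resistance bound.
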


\begin{proof}
The right-hand inequality is straightforward. For each edge $(u,v)$ in $\gdir(P)$, we can choose a back-and-forth path between $u$ and $v$, because there are edges $(u,u)$ (a self loop) and $(u,v)$ in $\gdir(P)$. Therefore, any edge appearing in $\mathcal{G}(P)$ also appears in $\mathcal{G}(P^*P)$. 
In fact,
\begin{align*}
    (P^*P)_{uv}
=\frac{1}{\pi_u}\sum_{w}\pi_w P_{wu}P_{wv}
\geq
\frac{1}{\pi_u}\pi_u P_{uu}P_{uv}
= P_{uu}P_{uv} >0.
\end{align*}
By Lemma~\ref{rayleigh}, $\mathcal{R}_{uv}(\mathcal{G}(P^*P))\leq \mathcal{R}_{uv}(\mathcal{G}(P))$ for all $u,v$.

For the left-hand inequality, we adapt the standard argument for fuzz graphs \cite[Lem.~5.5.1]{hfuzz}. 
We write $\mathcal{G}(P)=(V,\mathcal{E})$ and $\mathcal{G}(P^*P)=(V,\mathcal{E}^*)$, and call the edges in $\mathcal{E}^*\setminus\mathcal{E}$ ``new edges" (see \Cref{newedge}). Consider replacing each new edge in $\mathcal{G}(P^*P)$ with a series of two edges and an intermediate node, resulting in a modified graph $\bar{\mathcal{G}}=(\bar{V},\bar{\mathcal{E}})$. In this transformation, the resistances of the new edges are doubled. By Rayleigh's monotonicity law, the new resistance of every pair of two nodes in $V$ cannot exceed twice the original value, that is, $R_{uv}(\bar{\mathcal{G}})\leq 2R_{uv}(\mathcal{G}(P^*P))$ for every pair of nodes $u,v\in V$.

Next, focus on an intermediate node $x_{uv}$ added when replacing a new edge $\{u,v\}$. For every new edge $\{u,v\}$ in $\mathcal{G}(P^*P)$, there exists a back-and-forth path in $\mathcal{G}(P)$. Let $w$ be its pivot node. We then add a zero-resistance edge between $x_{uv}$ and $w$, and regard $x_{uv}$ and $w$ as an identical node. After this operation, the resulting graph $\mathcal{G}'$ satisfies $\mathcal{R}_{uv}(\mathcal{G}')\le \mathcal{R}_{uv}(\bar{\mathcal{G}})$ for every pair of $u,v\in V$ by Rayleigh’s monotonicity (see \Cref{Fig:evaluation})\footnote{Intuitively, this operation can be understood as short-circuiting.}.
\color{black} The graph $\mathcal{G}'$ differs from $\mathcal{G}(P)$ only in the multiplicity of edges, so we compare the two by counting parallel edges. 
Note that $\mathcal{G}'$ has one self-loop at each node, as does $\mathcal{G}(P)$. 
For $u,v\in V$ with $u\neq v$, the number of edges between $u$ and $v$ in $\mathcal{G}'$ is bounded by the number of back-and-forth paths through $\{u,v\}$ in $\mathcal{G}(P)$ plus $1$, the original edge.
Let $\eta$ be an upper bound on the number of such paths plus the original edge. 
Since $k$ parallel unit-resistance edges are equivalent to a single edge of resistance $1/k$, Rayleigh’s monotonicity law yields
$
\mathcal{R}_{uv}\left(\mathcal{G}'\right) \geq \frac{1}{\eta}\,\mathcal{R}_{uv}\left(\mathcal{G}(P)\right)$.
Combining this with the previous inequality, we obtain
\begin{align*}
\frac{1}{\eta}\,\mathcal{R}_{uv}\left(\mathcal{G}(P)\right) \leq \mathcal{R}_{uv}\left(\mathcal{G}'\right) \leq \mathcal{R}_{uv}\left(\bar{\mathcal{G}}\right) \leq 2\mathcal{R}_{uv}\left(\mathcal{G}(P^*P)\right)    
\end{align*}
for all $u,v\in V$.

\begin{figure}
  \centering
  \includegraphics[keepaspectratio, scale=0.5]{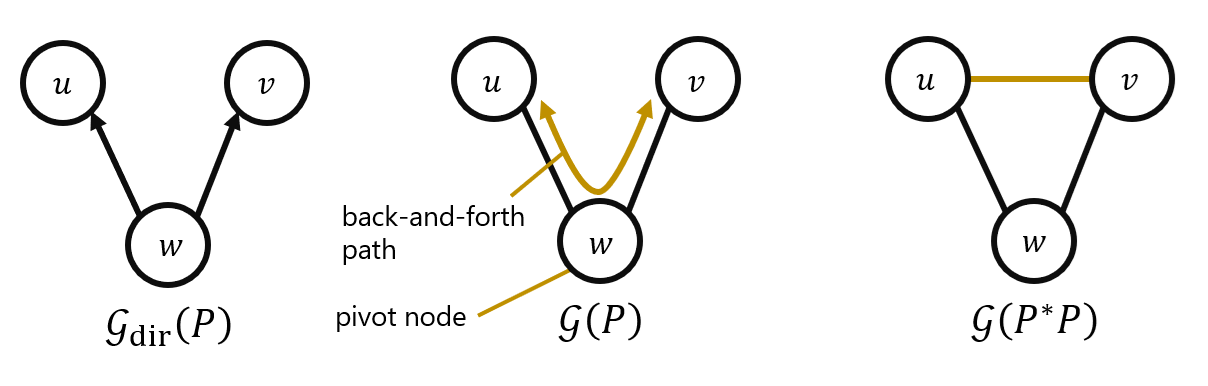}
  \caption{Notions of back-and-forth path, pivot, and new edge.}
  \label{newedge}
\end{figure}

\begin{figure}
  \centering
  \includegraphics[keepaspectratio, scale=0.6]{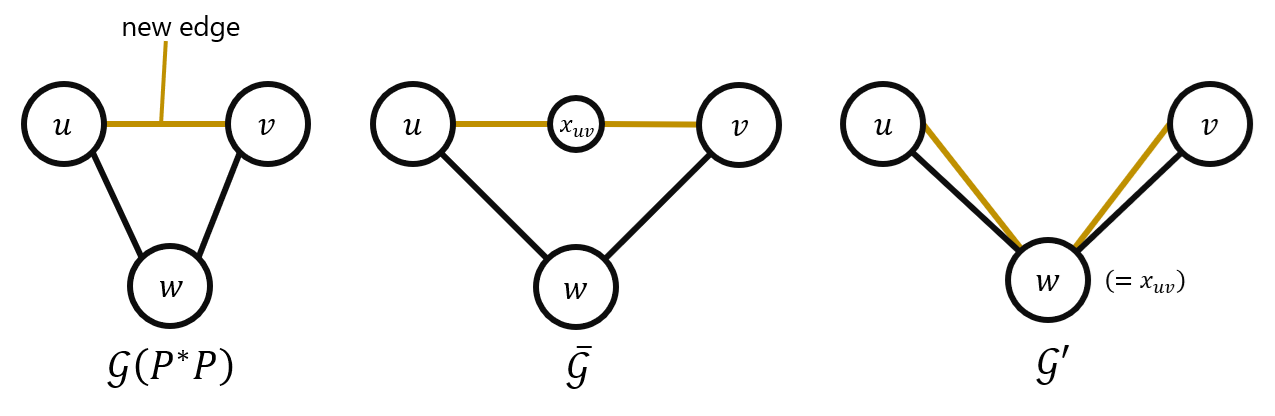}
  \caption{Graphs used to evaluate $\averes(\mathcal{G}(P^*P))$.}
  \label{Fig:evaluation}
\end{figure}

Finally, we bound $\eta$. 
For a back-and-forth path through $\{u,v\}$ in $\mathcal{G}(P)$, the pivot is either $u$ or $v$. 
If the pivot is $u$, then the other endpoint (distinct from $v$) can be chosen among at most $\dout-1$ out-neighbors of $u$; the same bound holds when the pivot is $v$. 
Including the original edge $\{u,v\}$ of $\bar{\mathcal{G}}$, we obtain $\eta\le 2(\dout-1)+1=2\dout-1$, and hence
$\frac{1}{2\dout-1}\,\mathcal{R}_{uv}\left(\mathcal{G}(P)\right)\ \le\ 2\,\mathcal{R}_{uv}\left(\mathcal{G}(P^*P)\right)$,
which proves the desired lower bound for $\averes\!\left(\mathcal{G}(P^*P)\right)$.
\end{proof}

Applying Lemma~\ref{bounds_GPP} to $P^*$, we obtain the following corollary.

\begin{corollary}\label{bounds_cor}
    If $P^*P=PP^*$, then
    \begin{eqnarray*}
        \frac{1}{4\din-2}\mathcal{R}_{uv}\paren{\mathcal{G}(P)}\leq \mathcal{R}_{uv}\paren{\mathcal{G}(P^*P)}\leq\mathcal{R}_{uv}\paren{\mathcal{G}(P)},
    \end{eqnarray*}
    under the same assumption as \Cref{bounds_GPP}.
\end{corollary}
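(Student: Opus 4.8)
The plan is to apply \Cref{bounds_GPP} to $P^*$ rather than to $P$. The first step is to check that $P^*=\varPi^{-1}P^\T\varPi$ is again a consensus matrix sharing the invariant measure $\bm{\pi}$. Indeed $P^*\bm{1}=\varPi^{-1}P^\T\bm{\pi}=\varPi^{-1}\bm{\pi}=\bm{1}$ since $P^\T\bm{\pi}=\bm{\pi}$, so $P^*$ is stochastic; its diagonal entries are $(P^*)_{ii}=\pi_i^{-1}P_{ii}\pi_i=P_{ii}>0$; and $\gdir(P^*)$ is the edge-reversal of $\gdir(P)$, which is strongly connected, so $P^*$ is irreducible. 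A direct computation also gives $\bm{\pi}^\T P^*=\bm{\pi}^\T$, so $\bm{\pi}$ is the invariant measure of $P^*$ as well, and \Cref{bounds_GPP} genuinely applies to $P^*$.

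Next I would record three identities. First, the $*$-operation is an involution: $(P^*)^*=\varPi^{-1}(P^*)^\T\varPi=\varPi^{-1}\varPi P\varPi^{-1}\varPi=P$. Second, $\mathcal{G}(P^*)=\mathcal{G}(P)$, because $(P^*)_{uv}=\pi_u^{-1}\pi_v P_{vu}$ is nonzero exactly when $P_{vu}$ is, so $\{u,v\}$ is an edge of $\mathcal{G}(P^*)$ if and only if it is an edge of $\mathcal{G}(P)$. Third, since $\gdir(P^*)$ reverses every edge of $\gdir(P)$, the outdegree of a node in $\gdir(P^*)$ equals its indegree in $\gdir(P)$; hence the maximum outdegree of $\gdir(P^*)$ is exactly $\din$, the maximum indegree of $\gdir(P)$.

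With these in hand, applying \Cref{bounds_GPP} to $P^*$ yields
\begin{eqnarray*}
\frac{1}{4\din-2}\mathcal{R}_{uv}\paren{\mathcal{G}(P^*)}\leq \mathcal{R}_{uv}\paren{\mathcal{G}((P^*)^*P^*)}\leq\mathcal{R}_{uv}\paren{\mathcal{G}(P^*)},
\end{eqnarray*}
where the role of $\dout$ is now played by the maximum outdegree of $\gdir(P^*)$, namely $\din$. Substituting $\mathcal{G}(P^*)=\mathcal{G}(P)$ and $(P^*)^*P^*=PP^*$, and then invoking the hypothesis $PP^*=P^*P$ to replace $\mathcal{G}(PP^*)$ by $\mathcal{G}(P^*P)$, gives precisely the claimed chain of inequalities. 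The only place the commutativity assumption enters is this last identification: without $PP^*=P^*P$, the lemma applied to $P^*$ controls $\mathcal{R}_{uv}(\mathcal{G}(PP^*))$ rather than $\mathcal{R}_{uv}(\mathcal{G}(P^*P))$. Every other step is a routine verification, so there is no genuine obstacle here; the one point requiring care is matching the maximum outdegree of $\gdir(P^*)$ to the indegree $\din$ of $\gdir(P)$, which is what produces the constant $4\din-2$ in place of $4\dout-2$.
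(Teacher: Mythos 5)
Your proposal is correct and follows essentially the same route as the paper: apply \Cref{bounds_GPP} to $P^*$, use $(P^*)^*=P$, $\mathcal{G}(P^*)=\mathcal{G}(P)$, and the fact that the maximum outdegree of $\gdir(P^*)$ equals $\din$, then invoke $PP^*=P^*P$ to identify $\mathcal{G}(PP^*)$ with $\mathcal{G}(P^*P)$. The additional verifications you supply (that $P^*$ is a consensus matrix with the same invariant measure) are correct and make the argument more self-contained than the paper's version.
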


\begin{proof}
    Substituting $P$ in \Cref{bounds_GPP} \correctionx{with} $P^*$, then we obtain
    \begin{eqnarray*}
        \frac{1}{4\din-2}\mathcal{R}_{uv}\paren{\mathcal{G}(P^*)}\leq \mathcal{R}_{uv}\paren{\mathcal{G}(PP^*)}\leq\mathcal{R}_{uv}\paren{\mathcal{G}(P^*)},
    \end{eqnarray*}
    since $(P^*)^*=P$ and the outdegrees of $\gdir(P^*)$ corresponds to the indegrees of $\gdir(P)$. By $P^*P=PP^*$ and $\mathcal{G}(P^*)=\mathcal{G}(P)$, the proof is completed.
\end{proof}

We are now in a position to prove \Cref{mainlemma2}.
\begin{proof}[Proof of \Cref{mainlemma2}]
    Combining \Cref{simple_monotonicity} and \Cref{bounds_GPP}, we obtain
    \begin{eqnarray*}
        \frac{1}{n\pi_{\max}(4\dout-2)(\din+1) p_{\max}^2}\averes(\mathcal{G}(P^*P))\leq \averes(C_{P^*P})\leq \frac{1}{n\pi_{\min}p_{\min}^2}\averes(\mathcal{G}(P^*P)).
    \end{eqnarray*}
    In addition, if $P^*P=PP^*$, we can change $\dout$ in the leftmost term to $\din$ by \Cref{bounds_cor}. In this case, the lower bound for $\averes(C_{P^*P})$ can be obtained by $\frac{1}{n\pi_{\max}f(\din) p_{\max}^2}\averes(\mathcal{G}(P^*P))$ where $f(\delta)=4\delta^2+2\delta-2$. Using \Cref{mainres1}, the proof is completed.
\end{proof}

\begin{proof}[Proof of \Cref{mainres2}]
Follows immediately from \Cref{mainres1} and \Cref{mainlemma2}.
\end{proof}

In the end of the section, we mention a special case in which our theorem can be written more simply.

\begin{corollary}\label{cayleycorollary}
    Let $P$ be a normal consensus matrix with invariant measure $\bm{\pi}$, and let $\mathcal{G}(P)$ be the associated undirected graph with $P$. Then,
    \begin{eqnarray*}
        \frac{1}{p_{\max}^2f(\din)}\averes(\mathcal{G}(P))\leq J(P)\leq \frac{1}{p_{\min}^2}\averes(\mathcal{G}(P)),
    \end{eqnarray*}
    where $f(\din)=4\din^2+2\din-2$.
\end{corollary}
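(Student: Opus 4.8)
The plan is to obtain the corollary as a direct specialization of \Cref{mainres2}, exploiting the two facts about normal consensus matrices already established in \Cref{Prop_normal}. First, I would recall that a normal consensus matrix satisfies $P^*P=PP^*$; this is exactly the hypothesis required to activate the lower bounds in \Cref{mainres2}, so both the upper and lower bounds there become available. Second, I would use the stronger conclusion buried in the proof of \Cref{Prop_normal}: a normal consensus matrix is in fact doubly stochastic, so its invariant measure is $\bm{\pi}=\frac{1}{n}\bm{1}$, and hence $\pi_{\min}=\pi_{\max}=\frac{1}{n}$.

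With these two observations the proof reduces to substitution. For the upper bound, I would plug $\pi_{\max}=\pi_{\min}=\frac{1}{n}$ into the $J(P)$ upper bound of \Cref{mainres2}, observing that the prefactor collapses as
\begin{eqnarray*}
\frac{\pi_{\max}^3 n}{p_{\min}^2\pi_{\min}^2}=\frac{n\cdot n^{-3}}{p_{\min}^2\, n^{-2}}=\frac{1}{p_{\min}^2},
\end{eqnarray*}
which yields $J(P)\leq \frac{1}{p_{\min}^2}\averes(\mathcal{G}(P))$. For the lower bound, since $PP^*=P^*P$ holds, I would substitute the same values into the lower $J(P)$ bound of \Cref{mainres2}, and again the $n$-powers and $\pi$-factors cancel:
\begin{eqnarray*}
\frac{\pi_{\min}^3 n}{p_{\max}^2 f(\din)\,\pi_{\max}^2}=\frac{n\cdot n^{-3}}{p_{\max}^2 f(\din)\, n^{-2}}=\frac{1}{p_{\max}^2 f(\din)},
\end{eqnarray*}
giving $J(P)\geq \frac{1}{p_{\max}^2 f(\din)}\averes(\mathcal{G}(P))$. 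Combining the two inequalities proves the claim.

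There is essentially no analytic obstacle here; the work is conceptual rather than computational. The one point that needs care—and which I would state explicitly—is that invoking the lower bound of \Cref{mainres2} is legitimate precisely because normality guarantees the commutation condition $P^*P=PP^*$ via \Cref{Prop_normal}, so the corollary is not merely a notational simplification but genuinely relies on normality (rather than the weaker condition alone) to also supply the uniform stationary distribution that makes all the $\pi$-dependent constants collapse to $1$. Thus the main thing to verify carefully is that both consequences of normality are correctly imported before the substitution is carried out.
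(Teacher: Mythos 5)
Your proposal is correct and follows essentially the same route as the paper's own proof: invoke \Cref{Prop_normal} to get $P^*P=PP^*$ and the doubly stochastic property (hence $\pi_{\min}=\pi_{\max}=\tfrac{1}{n}$), then substitute into \Cref{mainres2}. The explicit cancellation of the prefactors is carried out correctly, so nothing is missing.
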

\begin{proof}
    Because a normal stochastic matrix is doubly stochastic, $\bm{\pi}=\frac{1}{n}\bm{1}$ and $\pi_{\min}=\pi_{\max}=\frac{1}{n}$. In addition, \correctionx{by \Cref{Prop_normal}}, we have $P^*P=PP^*$. Using \Cref{mainres2}, we complete the proof.
\end{proof}
\color{black}

\section{Important examples and applications}
In this section, we will consider important examples for applying the result in Section 3.

\subsection{Necessity of the assumption $P^*P=PP^*$}
We have assumed the equality $P^*P=PP^*$ for the lower bounds in \Cref{mainres1} and \Cref{mainres2}. We will discuss the necessity of the assumption. Consider the following consensus matrix:
\begin{eqnarray*}
  P_{\varepsilon}=\begin{pmatrix}
    \varepsilon & 1-\varepsilon & 0 \\
    0 & \varepsilon & 1-\varepsilon \\
    \frac{1}{2} & 0 & \frac{1}{2}
  \end{pmatrix}.
\end{eqnarray*}
We enforce $\varepsilon>0$ to comply with our standing assumption that the diagonal entries are positive; otherwise some of the lemmas used in the paper do not apply.

We compute $J(P_{\varepsilon})$ and the corresponding upper and lower bounds from \Cref{mainres1,mainres2} for $\varepsilon\le 1/2$. 
The results are shown in \Cref{figure:smallcase}.
In \Cref{figure:smallcase}, the lower bound from \Cref{mainres1} fails to hold when $\varepsilon<1/2$. 
This is unsurprising, since $P_\varepsilon^*P_\varepsilon\neq P_\varepsilon P_\varepsilon^*$ for $\varepsilon<1/2$, and the proof of \Cref{mainres1} relies on the commutation $P^*P=PP^*$.

To understand the mechanism, note that, as $\varepsilon\to 0$, $\pe^t-\bm{1}\bm{\pi}$ converges exponentially at a rate determined by the second largest eigenvalue of $P$. The eigenvalues of $\pe$ are $1$ and $\alpha_{\pm}:=-\paren{\frac{1}{4}-\varepsilon}\pm\frac{\mathrm{i}}{2}\sqrt{\frac{7}{4}-2\varepsilon}$. The eigenvalues of $\pe^*$ coincide with those of $\pe$ by similarity. Therefore, $(\pe^*)^t\pe^t-\bm{1}\bm{\pi}$ converges exponentially to some real matrix at a rate close to $1/2$. On the other hand, $\pe^*\pe$ has the second eigenvalue $1-O(\varepsilon)$. This means that if $\varepsilon$ is small enough, the convergence of $(\pe^*\pe)^t$ is very slow, and $\tr (\pe^*\pe)^t$ can be much larger than $\tr (\pe^*)^t\pe^t$. Therefore, the right-hand side of \eqref{jweval3} becomes much larger than the leftmost hand, and \Cref{mainres1} does not hold.
\color{black}

However, in \Cref{figure:smallcase}, it seems that the lower bound of \Cref{mainres2} holds for small $\varepsilon$, although there are no theoretical guarantees in our paper. This is likely due to the rough evaluation of \Cref{mainlemma2}. When $\varepsilon$ decreases, the average effective resistance $\averes(C_{P^*P})$ increases linearly to $1/\varepsilon$, \correctionx{making the evaluation in \Cref{mainlemma2} increasingly loose}. Taking into account this behavior, it may be possible to find sharper or more general lower bounds for $J(P)$ in future research.

\begin{figure}[htbp]
  \begin{minipage}[htbp]{0.48\linewidth}
    \centering
    \includegraphics[keepaspectratio, scale=0.38]{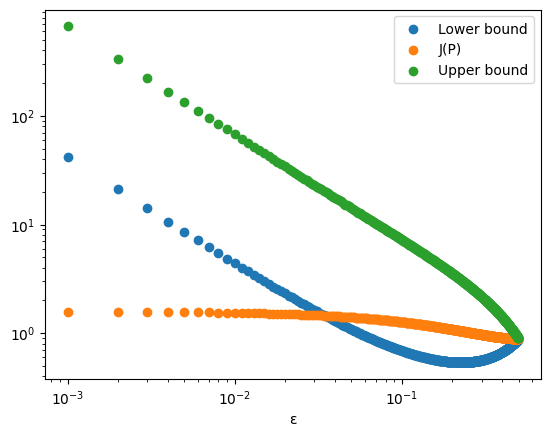}
  \end{minipage}
  \begin{minipage}[htbp]{0.48\linewidth}
    \centering
    \includegraphics[keepaspectratio, scale=0.38]{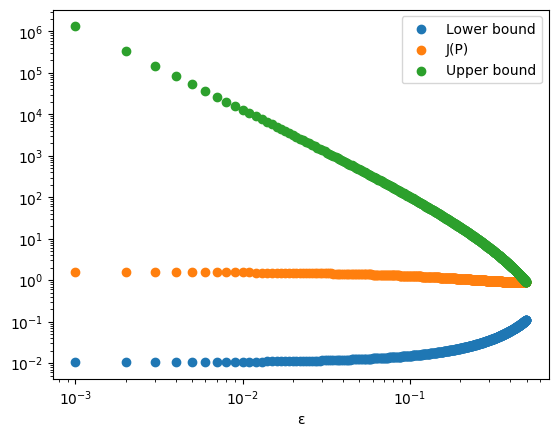}
  \end{minipage}
  \caption{The values of $J(P_{\varepsilon})$ for $\varepsilon\in[0.001,0.5]$. The left panel shows the lower and upper bounds in \Cref{mainres1}. The right panel shows the lower and upper bounds in \Cref{mainres2}.}
  \label{figure:smallcase}
\end{figure}

\subsection{Applications to Cayley graphs}
An important application where the property $P^*P=PP^*$ holds is that of Cayley consensus matrices. Given a finite abelian group $(V,+)$ and a subset $S\subseteq V$, the Cayley graph $\mathcal{G}$ is defined as a directed graph with the vertex set $V$ and the edge set $E=\{(u,v)\mid u-v\in S\}$. The associated Cayley matrix $P\in \mathbb{R}^{V\times V}$ on this graph is defined by $P_{uv}=g(u-v)$ for some function $g$, called a generator,
where rows and columns are indexed by the elements of $V$. 
If a consensus matrix $P$ is a Cayley matrix, then $P^*P=PP^*$ holds, since Cayley matrices are normal (see \cite[Section III]{cayley}) and \Cref{Prop_normal} applies.

\begin{figure}
\centering
\includegraphics[keepaspectratio, scale=0.6]{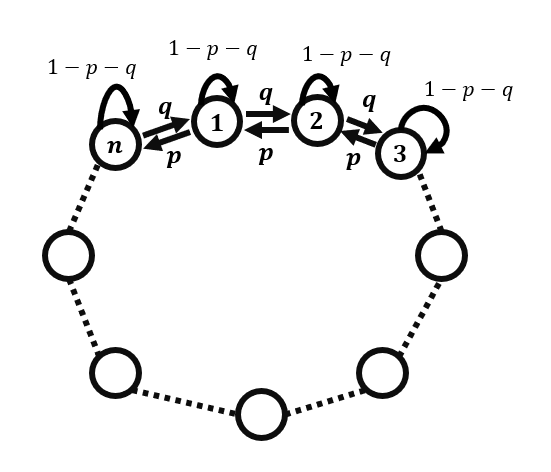}
\caption{Consensus algorithm on a circle.}
\label{fig:circlegraph}
\end{figure}

A Cayley consensus matrix naturally arises for consensus dynamics on periodic grids (discrete tori). We begin with the one-dimensional case. As shown in \Cref{fig:circlegraph}, there are $n$ agents on a circle, and each agent updates its information by the weighted sum of the left agent (with weight $p$), the right agent (with $q$) and itself (with weight $1-p-q$). This graph is the Cayley graph generated by the cyclic group 
$\mathbb{Z}_n := \{0,1,\ldots,n-1\}$ with addition modulo $n$ 
and the generating set $S=\{-1,0,1\}$. The consensus matrix is
\begin{eqnarray*}P=
  \begin{pmatrix}
    1-p-q & q & 0 & \cdots & p \\
    p & 1-p-q & q & \cdots & 0 \\
    0 & p & 1-p-q & \cdots & 0 \\
    \vdots & \vdots & \vdots & \ddots & \vdots \\
    q & 0 & 0 & \cdots & 1-p-q \\
  \end{pmatrix}.
\end{eqnarray*} 
This is a Cayley matrix obtained by
\begin{eqnarray*}
  g(x):=\begin{cases}
    p & \text{if }x\equiv 1\mod n\\
    1-p-q & \text{if }x\equiv 0\mod n\\
    q & \text{if }x\equiv -1\mod n\\
    0 & \text{otherwise}.
  \end{cases}
\end{eqnarray*}

Higher-dimensional tori are treated similarly using $\mathbb{Z}_n^d$. If we define $S$ as the neighbor set (for example, $\{-1,0,1\}^d$) and the value of $g(\bm{h})$ for each $\bm{h}\in S$ so that the sum of them is $1$, then we can consider a consensus algorithm on a $d$ dimensional grid where each agent communicates only with the neighbor agents. The example of a two dimensional torus is shown in \Cref{figure:gridgraph}. In this case, we set $S=\{-1,0,1\}^2$ and $g(\pm1,\pm1)=0$. Grids with boundaries can be reduced to the torus case via an appropriate reflection argument \cite{cayley}.

\begin{figure}
  \centering
  \includegraphics[keepaspectratio, scale=0.5]{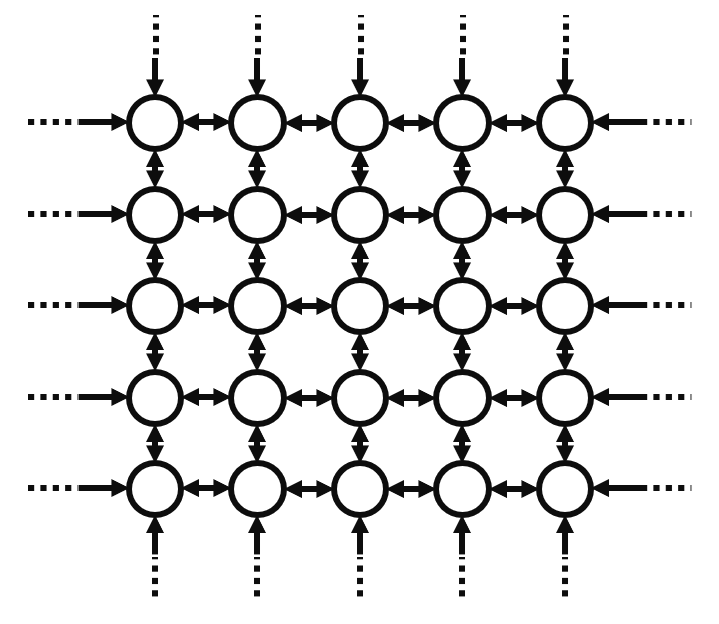}
  \caption{A grid graph on two dimensional torus. The nodes in the leftmost column and the top row are adjacent to those in the rightmost column and in the bottom row, respectively.}
  \label{figure:gridgraph}
\end{figure}

We first recall the asymptotic behavior of $J(P)$ for Cayley consensus matrices.

\begin{proposition}[Proposition 1 in \cite{cayley}, modified]\label{asymptotic1}
  Let $\{P_n\}_{n\geq 1}$ be a Cayley consensus matrix family on the Cayley graph on $\mathbb{Z}_n^d$ with the neighbor set $\{-1,0,1\}^d$, associated with a fixed generator $g$. Then, there exist $C_d,C_d'>0$ (depending only on $d$) such that:
  \begin{itemize}
    \item if $d=1$:
    \[C_dn^d\leq J(P_n)\leq C_d'n^d\]
    \item if $d=2$:
    \[C_d\log(n^d)\leq J(P_n)\leq C_d'\log(n^d)\]
    \item if $d\geq 3$:
    \[C_d\leq J(P_n)\leq C_d'\]
  \end{itemize}
\end{proposition}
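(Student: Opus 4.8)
The plan is to reduce the claim to a growth estimate for the average effective resistance $\averes(\mathcal{G}(P_n))$ and then to evaluate the latter by Fourier analysis on $\mathbb{Z}_n^d$. Since the generator $g$ is fixed, every $P_n$ is a Cayley matrix and hence normal, so \Cref{Prop_normal} gives $P_n^*P_n=P_nP_n^*$ and \Cref{cayleycorollary} applies:
\[
\frac{1}{p_{\max}^2 f(\din)}\,\averes(\mathcal{G}(P_n))\;\leq\; J(P_n)\;\leq\;\frac{1}{p_{\min}^2}\,\averes(\mathcal{G}(P_n)).
\]
The nonzero entries of $P_n$ are exactly the nonzero values of $g$, so $p_{\min}$ and $p_{\max}$ do not depend on $n$, and the indegree obeys $\din\leq 3^d-1$; hence both prefactors are constants depending only on $d$ and the fixed $g$. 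It therefore suffices to prove that $\averes(\mathcal{G}(P_n))$ is of order $n^d$, $\log(n^d)$, and $1$ for $d=1$, $d=2$, and $d\geq 3$ respectively (note $\log(n^d)\asymp\log n$).

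I would next rewrite $\averes(\mathcal{G}(P_n))$ as a spectral sum. The undirected graph $\mathcal{G}(P_n)$ is itself translation invariant on $\mathbb{Z}_n^d$, with a fixed symmetric offset set $S'\subseteq\{-1,0,1\}^d$, so its combinatorial Laplacian $L_n$ is diagonalized by the Fourier characters $\chi_{\bm{k}}(\bm{x})=\exp(2\pi\mathrm{i}\,\bm{k}^\T\bm{x}/n)$, with eigenvalues $\lambda_{\bm{k}}=\sum_{\bm{h}\in S'\setminus\{\bm{0}\}}\big(1-\cos(2\pi\,\bm{k}^\T\bm{h}/n)\big)$. Writing $N:=n^d$ and using $\mathcal{R}_{uv}(\mathcal{G}(P_n))=(\bm{e}_u-\bm{e}_v)^\T L_n^{\dagger}(\bm{e}_u-\bm{e}_v)$ together with $L_n^{\dagger}\bm{1}=\bm{0}$, a direct computation gives $\sum_{u,v}\mathcal{R}_{uv}=2N\,\tr L_n^{\dagger}$, and therefore
\[
\averes(\mathcal{G}(P_n))=\frac{1}{N}\,\tr L_n^{\dagger}=\frac{1}{N}\sum_{\bm{k}\neq\bm{0}}\frac{1}{\lambda_{\bm{k}}}.
\]

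The heart of the argument is the asymptotic evaluation of this sum, which encodes the recurrence/transience dichotomy of the lattice random walk. Using $1-\cos\theta\asymp\theta^2$ for $|\theta|\leq\pi$ and replacing each coordinate of $\bm{k}$ by its distance to $0$ on $\mathbb{Z}_n$, I would establish the two-sided bound $\lambda_{\bm{k}}\asymp\|\bm{k}\|^2/n^2$ for $\bm{k}$ near the origin and a uniform lower bound $\lambda_{\bm{k}}\gtrsim 1$ for the remaining (high-frequency) modes; strict positivity of every $\lambda_{\bm{k}}$ with $\bm{k}\neq\bm{0}$ is guaranteed by connectivity of $\mathcal{G}(P_n)$, i.e. by irreducibility of $P_n$. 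The low-frequency modes dominate, and comparing their contribution with a polar integral yields
\[
\tr L_n^{\dagger}\;\asymp\;\sum_{\bm{k}\neq\bm{0}}\frac{n^2}{\|\bm{k}\|^2}\;\asymp\;n^2\!\int_{1}^{n} r^{\,d-3}\,\mathrm{d}r\;+\;O(n^d),
\]
so that $\tr L_n^{\dagger}$ is of order $n^2$, $n^2\log n$, and $n^d$ for $d=1$, $d=2$, and $d\geq 3$. Dividing by $N=n^d$ gives $\averes(\mathcal{G}(P_n))$ of order $n$, $\log n$, and $1$, which, combined with the sandwich of the first paragraph, proves the proposition.

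I expect the main obstacle to be the uniform spectral estimates and the integral comparison in the third step: the near-origin equivalence $\lambda_{\bm{k}}\asymp\|\bm{k}\|^2/n^2$ must be made precise with constants independent of $n$ (handling the torus periodicity of each $\theta_j=2\pi k_j/n$ through the elementary inequality $1-\cos\theta\geq (2/\pi^2)\theta^2$ on $|\theta|\leq\pi$, and using that $S'$ spans $\mathbb{R}^d$, a consequence of connectivity, to keep the quadratic form nondegenerate), while the high-frequency modes must be controlled so that their total contribution stays $O(n^d)$ and never exceeds the leading term. A cleaner but less explicit alternative is to bypass the exact spectrum of $\mathcal{G}(P_n)$ and instead use Rayleigh's monotonicity law (\Cref{rayleigh}) to sandwich $\mathcal{G}(P_n)$ between the nearest-neighbor torus and the full fixed-range torus on $\mathbb{Z}_n^d$, for both of which the above effective-resistance asymptotics are classical.
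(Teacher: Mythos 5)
Your argument is essentially correct, but it takes a genuinely different (in fact, reversed) route from the paper's: the paper does not prove this proposition at all --- it imports it as Proposition~1 of \cite{cayley}, where the result is obtained by diagonalizing $P_n$ itself via the characters of $\mathbb{Z}_n^d$ and evaluating $J(P_n)=\tfrac{1}{n^d}\sum_{\lambda\neq 1}(1-|\lambda|^2)^{-1}$ through a trigonometric-polynomial estimate. You instead pass to the average effective resistance through \Cref{cayleycorollary} and then compute $\averes(\mathcal{G}(P_n))=\tfrac{1}{n^d}\sum_{\bm{k}\neq\bm{0}}\lambda_{\bm{k}}^{-1}$ for the graph Laplacian; both are Fourier computations on the discrete torus, but yours proves the resistance asymptotics directly and recovers $J(P_n)$ only up to the multiplicative loss $p_{\max}^2 f(\din)/p_{\min}^2$, whereas the direct eigenvalue route of \cite{cayley} needs no such detour. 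Be aware that in the paper the implication runs the other way --- \Cref{cayleyaverescorollary} \emph{deduces} the resistance asymptotics from this proposition --- so your route turns that corollary into an input rather than a consequence; there is no circularity, since your spectral evaluation of $\tr L_n^{\dagger}$ is independent, but this reversal should be stated if the proof is adopted. Two points to tighten: (i) your constants inevitably depend on the fixed generator $g$ (through $p_{\min}$, $p_{\max}$, and the nondegenerate quadratic form $\sum_{\bm{h}}(\bm{\theta}^\T\bm{h})^2$), not on $d$ alone, though this imprecision is already present in the statement itself; (ii) the uniform high-frequency bound $\lambda_{\bm{k}}\gtrsim 1$ does hold, because irreducibility of $P_n$ for \emph{every} $n\geq 1$ forces the symmetrized support $S'$ to generate all of $\mathbb{Z}^d$ (not merely a finite-index sublattice), so $\sum_{\bm{h}}\bigl(1-\cos(\bm{\theta}^\T\bm{h})\bigr)$ vanishes on the continuous torus only at $\bm{\theta}=\bm{0}$ and compactness supplies the constant --- this is exactly the obstacle you flagged, and it is the only place where the sketch requires genuine additional care.
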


This proposition says that, for the Cayley matrix $P\in \mathbb{R}^{V\times V}$ on the grid graph on the $d$ dimensional torus, \[J(P)=\begin{cases}
    \Theta(|V|) & \text{if }d=1\\
    \Theta(\log|V|) & \text{if }d=2\\
    \Theta(1) & \text{if }d\geq 3.
\end{cases}\]

Since $J(P)$ is already understood, we next relate $J(P)$ to our upper and lower bounds involving $\averes(\mathcal{G}(P))$. \Cref{cayleycorollary} establishes a direct relationship between 
the LQ cost $J(P)$ and the average effective resistance $\averes(\mathcal{G}(P))$ 
for normal consensus matrices. 
To further clarify the behavior of $\averes(\mathcal{G}(P))$ in concrete settings, 
we specialize to Cayley graphs on $\mathbb{Z}_n^d$ and derive its asymptotic scaling.

\begin{corollary}\label{cayleyaverescorollary}
    Let $\mathcal{G}_n$ be the Cayley graph on $\mathbb{Z}_n^d$ with the neighbor set $\{-1,0,1\}^d$. Then, there exist $C_d,C_d'>0$ (depending only on $d$) such that:
  \begin{itemize}
    \item if $d=1$:
    \[C_dn^d\leq \averes(\mathcal{G}_{\correctionx{n}})\leq C_d'n^d\]
    \item if $d=2$:
    \[C_d\log(n^d)\leq \averes(\mathcal{G}_{\correctionx{n}})\leq C_d'\log(n^d)\]
    \item if $d\geq 3$:
    \[C_d\leq \averes(\mathcal{G}_{\correctionx{n}})\leq C_d'\]
  \end{itemize}
\end{corollary}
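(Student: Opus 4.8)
The plan is to transfer the known asymptotics of $J(P_n)$ from \Cref{asymptotic1} to $\averes(\mathcal{G}_n)$ through the two‑sided comparison in \Cref{cayleycorollary}. The essential observation is that $\averes(\mathcal{G}_n)$ is a purely topological quantity, so it suffices to exhibit one Cayley consensus matrix whose associated undirected graph is exactly $\mathcal{G}_n$ and whose parameters $p_{\min}$, $p_{\max}$, and $\din$ stay bounded as $n\to\infty$; then \Cref{cayleycorollary} pins $\averes(\mathcal{G}_n)$ to $J(P_n)$ up to constants depending only on $d$, and \Cref{asymptotic1} supplies the scaling of $J(P_n)$.

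First I would fix the uniform generator $g(\bm{h})=3^{-d}$ for every $\bm{h}\in\{-1,0,1\}^d$ and let $P_n$ be the associated Cayley matrix on $\mathbb{Z}_n^d$. I would check that $P_n$ is a legitimate consensus matrix: it is row stochastic since $\sum_{\bm{h}}g(\bm{h})=3^d\cdot 3^{-d}=1$, its diagonal entry $g(\bm{0})=3^{-d}$ is positive, and it is irreducible because $\{-1,0,1\}^d$ contains $\pm\bm{e}_i$ for every $i$, so the generated subgroup is all of $\mathbb{Z}_n^d$. Being a Cayley matrix, $P_n$ is normal, so \Cref{Prop_normal} gives $P_n^*P_n=P_nP_n^*$ and \Cref{cayleycorollary} applies. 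Because $g$ is strictly positive on the full neighbor set, $(P_n)_{uv}>0$ exactly when $u-v\in\{-1,0,1\}^d$, so $\mathcal{G}(P_n)$ coincides with the undirected Cayley graph $\mathcal{G}_n$ for all $n\ge 3$ (small $n$ are vacuous, the claim being asymptotic).

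Next I would record that the parameters entering \Cref{cayleycorollary} are independent of $n$: here $p_{\min}=p_{\max}=3^{-d}$ and $\din=3^d-1$, so $f(\din)=4(3^d-1)^2+2(3^d-1)-2$ depends only on $d$. Rearranging the two inequalities of \Cref{cayleycorollary} then yields
\[
p_{\min}^2\,J(P_n)\;\le\;\averes(\mathcal{G}_n)\;\le\;p_{\max}^2 f(\din)\,J(P_n),
\]
that is, $\averes(\mathcal{G}_n)=\Theta\big(J(P_n)\big)$ with implied constants depending only on $d$. Substituting the three regimes of \Cref{asymptotic1} for this fixed $g$ produces the claimed bounds $\Theta(n^d)$, $\Theta(\log(n^d))$, and $\Theta(1)$; the constants $C_d,C_d'$ can be taken as the products of the $d$‑dependent constants above with those of \Cref{asymptotic1}.

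The one point requiring care—rather than a genuine obstacle—is the decoupling between $\averes(\mathcal{G}_n)$, defined with unit conductances, and $J(P_n)$, which depends on the actual weights of $P_n$. Choosing the uniform $g$ resolves this by simultaneously forcing $\mathcal{G}(P_n)=\mathcal{G}_n$ and keeping $p_{\min}$, $p_{\max}$, $\din$ constant in $n$; any strictly positive generator on the full neighbor set would work equally well, so the topological quantity $\averes(\mathcal{G}_n)$ inherits the scaling of $J(P_n)$ uniformly, completing the proof.
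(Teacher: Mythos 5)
Your proposal is correct and follows essentially the same route as the paper: fix a Cayley consensus matrix family on $\mathcal{G}_n$ with an $n$-independent generator so that $p_{\min}$, $p_{\max}$, and $f(\din)$ depend only on $d$, invert the two-sided bound of \Cref{cayleycorollary} to get $\averes(\mathcal{G}_n)=\Theta(J(P_n))$, and then import the scaling of $J(P_n)$ from \Cref{asymptotic1}. The only difference is that you instantiate the generator explicitly as the uniform one and verify the hypotheses in detail, whereas the paper leaves $g$ as an arbitrary fixed generator.
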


\begin{proof}
    Let $\{P_n\}_{n\geq 1}$ be a Cayley consensus matrix family on $\mathcal{G}_n$, associated with a fixed generator $g$. Then, $p_{\min},p_{\max}$ and an upper bound of $f(\din)$ is determined by $g$. By \Cref{cayleycorollary}, the upper and lower bounds of $J(P_n)$ \correctionx{depend linearly on} $\averes(\mathcal{G}_n)$ under fixed $g$ and $d$. Therefore, substituting $J(P_n)$ \correctionx{with} $\averes(\mathcal{G}_n)$ in \Cref{asymptotic1}, the proof is completed.
\end{proof}

\correctionx{Thus, not only $J(P)$ but also the bounds of $J(P)$ in \Cref{cayleycorollary} grow logarithmically in $d=2$ and keep constant in $d=3$. To check these behaviors, we calculate $J(P)$ and the lower and upper bounds in \Cref{cayleycorollary}} as follows:
\begin{itemize}
    \item We fix the dimension $d=2$ or $3$ and the size of the grid $n$. Notice that we will generate the graph with $n^d$ nodes.
    \item For each $d$, we define the generator $f$ in the following two patterns:
    \begin{itemize}
        \item \textbf{Case 1.} We set $g(\bm{h})\in (0,1]$ for each $\bm{h}\in\{-1,0,1\}^d$ randomly and normalize so that the sum of $g(\bm{h})$ is $1$. Then, we check whether $g(\bm{h})$ is in a certain range $[p_{\min},p_{\max}]$. Here we define $p_{\min}=0.05$ and $p_{\max}=0.2$ in the case $d=2$, and $p_{\min}=0.01$ and $p_{\max}=0.1$ in the case $d=3$. In this case, the maximum indegree $\din$ of the graph is $8$ (in $d=2$) or $26$ (in $d=3$).
        \item \textbf{Case 2.} We set $g(\bm{h})=\frac{1}{d+1}$ for each $\bm{h}\in\{\bm{0},\mathbf{e}_1,\dots,\mathbf{e}_d\}$. In this case, the maximum indegree $\din$ of the graph is $2$ (in $d=2$) or $3$ (in $d=3$).
    \end{itemize}
    \item Generate the Cayley graph $\mathcal{G}$ and the Cayley matrix \correction{$P\in\correctionx{\mathbb{R}}^{n^d\times n^d}$} by the generator $g$. In case 1, $P$ is randomly determined, so we generate $20$ instances of $g$ for each $d,n$.
    \item Calculate $\averes(\mathcal{G})$ and $J(P)$. Compare the lower and upper bounds of \Cref{cayleycorollary} and $J(P)$.
\end{itemize}

\begin{figure}[htbp]
  \begin{minipage}[htbp]{0.48\linewidth}
    \centering
    \includegraphics[keepaspectratio, scale=0.34]{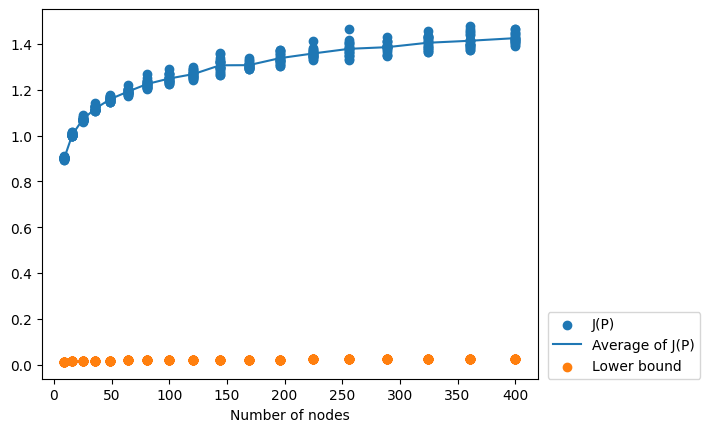}
  \end{minipage}
  \begin{minipage}[htbp]{0.48\linewidth}
    \centering
    \includegraphics[keepaspectratio, scale=0.34]{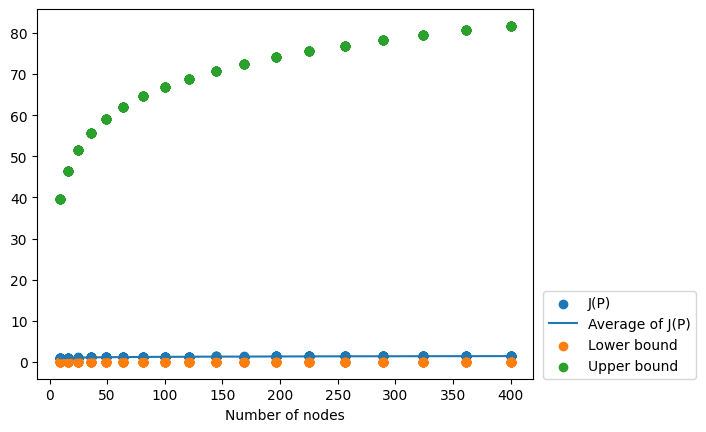}
  \end{minipage}
  \caption{The values of $J(P)$ in $d=2$, where $g$ is generated by the case 1. The left and right panels show the theoretical lower bound in \Cref{cayleycorollary}. The right panel also shows the upper bound, which is too large to draw in the left.}
  \label{figure:cayleyd2}
\end{figure}

\begin{figure}[htbp]
  \begin{minipage}[htbp]{0.48\linewidth}
    \centering
    \includegraphics[keepaspectratio, scale=0.34]{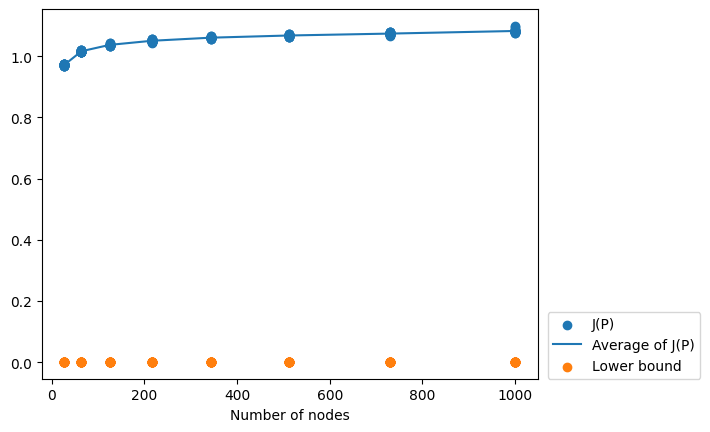}
  \end{minipage}
  \begin{minipage}[htbp]{0.48\linewidth}
    \centering
    \includegraphics[keepaspectratio, scale=0.34]{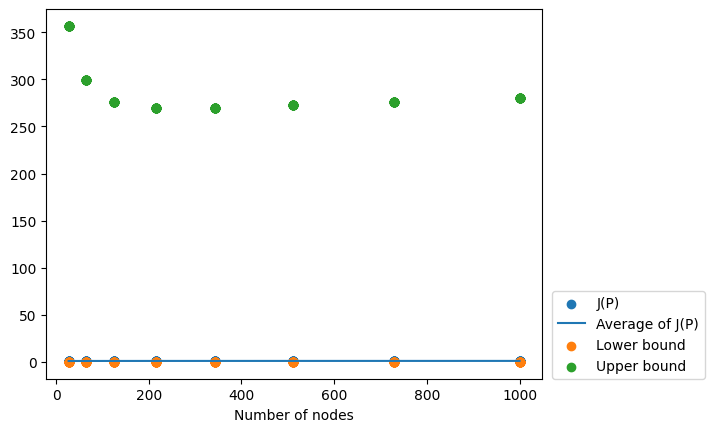}
  \end{minipage}
  \caption{The values of $J(P)$ in $d=3$, where $g$ is generated by the case 1. The left and right panels show the theoretical lower bound in \Cref{cayleycorollary}. The right panel also shows the upper bound.}
  \label{figure:cayleyd3}
\end{figure}

\begin{figure}[htbp]
  \begin{minipage}[htbp]{0.48\linewidth}
    \centering
    \includegraphics[keepaspectratio, scale=0.36]{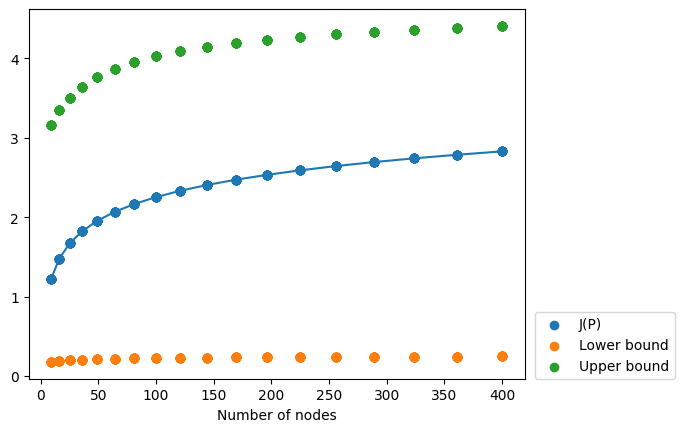}
  \end{minipage}
  \begin{minipage}[htbp]{0.48\linewidth}
    \centering
    \includegraphics[keepaspectratio, scale=0.36]{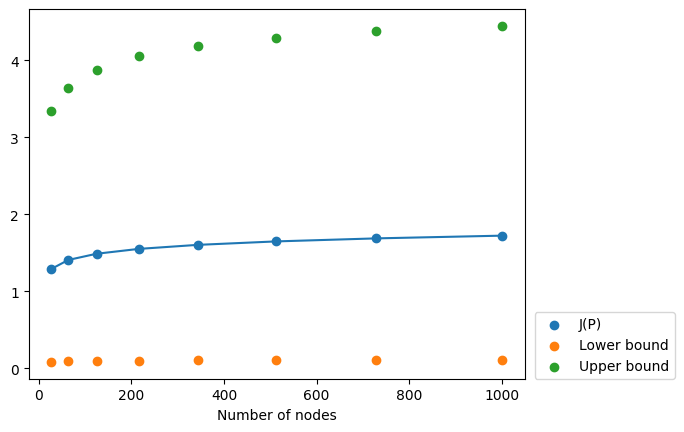}
  \end{minipage}
  \caption{The values of $J(P)$ in \correctionx{$d=2$ (left panel) and $d=3$ (right panel)}, where $g$ is generated by the case 2. The left and right panels show the theoretical lower and upper bounds in \Cref{cayleycorollary}.}
  \label{figure:cayleydir}
\end{figure}

The result is shown in \Cref{figure:cayleyd2}, \ref{figure:cayleyd3}, and \ref{figure:cayleydir}. In these figures, $J(P)$ appears to increase logarithmically in $d=2$, and does not change much in $d=3$. In addition, the change of $\averes(\mathcal{G})$ seems to be consistent with \Cref{cayleyaverescorollary}. These results also imply that the bound we show in the paper is not sharp in some cases. In fact, the ratio of \correctionx{the lower bound to the upper bound} in the case 1 of $d=3$ is $f(26)\times 0.1^2/0.01^2=275400$. However, in the case 2 of $d=2$, all elements are the same value, so the ratio is $f(2)=18$. Thus, our bounds are most informative when the in-degree is small and the generator weights are nearly uniform.

\subsection{Application to geometric graphs} \label{subsection_geometric}
\color{black}
In this section, we apply the \correction{bounds} obtained in \Cref{mainres2} to geometric graphs. Geometric graphs model the position in a $d$-dimensional space, so the result can be useful for controlling objects in the real world. This is a generalization of the result shown in \cite{mainp}.

A geometric graph is a connected, undirected and unweighted graph $\mathcal{G}(V,\mathcal{E})$ such that $V\subseteq \mathcal{Q}$, where $\mathcal{Q}:=[0,l]^d\subseteq \mathbb{R}^d$ and $|V|=n$. Notice that there are no constraint on $\mathcal{E}$ at first, but the distances between connected pairs of two nodes are used to define parameters.

For such graphs, we can define the following parameters \cite{mainp}:
\begin{enumerate}
  \item the minimum node distance \[s=\underset{u,v\in V,\ u\neq v}{\min}d_{\mathrm{E}}(u,v),\]where $d_{\mathrm{E}}$ denotes Euclidean distance;
  \item the maximum connected range \[r=\underset{(u,v)\in \mathcal{E}}{\max}d_{\mathrm{E}}(u,v),\]
  \item the maximum uncovered \correction{radius} \[\gamma=\sup\parbr{r>0 \mid \exists\bm{x}\in \mathcal{Q},\ B(\bm{x},r)\cap V=\emptyset\in \mathcal{E}},\] where $B(\bm{x},r)$ is a $d$-dimensional ball centered in $\bm{x}\in\mathbb{R}^d$ and with radius $r$;
  \item the minumum ratio between Euclidean distance and graphical distance \[\rho=\min\parbr{\frac{d_{\mathrm{E}}(u,v)}{d_{\mathcal{G}}(u,v)}\mid u,v\in V,\ u\neq v},\]where $d_{\mathcal{G}}$ is the length of the shortest path in $\mathcal{G}$.
\end{enumerate}

The previous study has shown an asymptotic behavior of $J(P)$ with respect to $n$ when $P$ is on geometric graphs and the parameters above are fixed.
\begin{proposition}[Theorem 4.1 in \cite{mainp}]\label{geo}
  Let $P\in\mathbb{R}^{n\times n}$ be a reversible consensus matrix with invariant measure $\bm{\pi}$, associated with a graph $\mathcal{G}=(V,\mathcal{E})$. Assume that all nonzero entries of $P$ belong to the interval $[p_{\min},p_{\max}]$and that $\mathcal{G}$ is a geometric graph with parameters $(s,r,\gamma,\rho)$ and nodes lying in $\mathcal{Q}=[0,l]^d$ in which $\gamma<l/4$. Then,
  \[k_1+q_1f_\mathrm{d}(n)\leq J(P)\leq k_2+q_2f_\mathrm{d}(n),\]
  where
  \[f_\mathrm{d}(n)=\begin{cases}
    n & \textrm{if }d=1,\\
    \log n & \textrm{if }d=2,\\
    1 & \textrm{if }d\geq 3,\\
  \end{cases}\]
  and where $k_1,k_2,q_1$, and $q_2$ are positive numbers which are functions of the following parameters only: the dimension $d$, the geometric parameters of the graph ($s,r,\gamma$,and $\rho$), the maximum degree $\delta$, $p_{\min}$,and $p_{\max}$, and the products $\pi_{\min} n$ and $\pi_{\max} n$.
\end{proposition}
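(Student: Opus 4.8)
The plan is to split the argument into an algebraic reduction, relating $J(P)$ to an average effective resistance on $\mathcal{G}$, followed by a geometric estimate of that resistance. For the first part, since $P$ is reversible I would invoke the effective-resistance representation of the LQ cost (Theorem 3.1 in \cite{mainp}, which is the reversible specialization of \Cref{mainres1} obtained by taking $P^*=P$). This expresses $J(P)$, up to constants, as an average effective resistance $\averes(C)$ of the network with conductance matrix $C=\Phi(P)$. Because every nonzero entry of $P$ lies in $[p_{\min},p_{\max}]$ and the products $\pi_{\min}n,\pi_{\max}n$ are controlled, every conductance of $C$ is pinned between two positive constants; Rayleigh's monotonicity law (\Cref{rayleigh}) then sandwiches $\averes(C)$ between constant multiples of the unit-conductance quantity $\averes(\mathcal{G})$. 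The net effect is an equivalence $J(P)=\Theta(\averes(\mathcal{G}))$, with multiplicative and additive constants depending only on the parameters listed in the statement, which is exactly the form $k_1+q_1 f_\mathrm{d}(n)\le J(P)\le k_2+q_2 f_\mathrm{d}(n)$ once the second part is established.

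The core of the proof is therefore the purely geometric claim $\averes(\mathcal{G})=\Theta(f_\mathrm{d}(n))$, and I would obtain it by comparing $\mathcal{G}$ with a $d$-dimensional integer grid through Rayleigh's monotonicity law. Tile the cube $\mathcal{Q}=[0,l]^d$ by subcubes of side comparable to $\gamma$; the hypothesis $\gamma<l/4$ guarantees that each subcube contains at least one node, so $\mathcal{G}$ fills $\mathcal{Q}$ without large holes and the number of occupied cells is $\Theta(n^{1/d})$ per axis. For the upper bound I would use the connected range $r$ and the distance ratio $\rho$ to exhibit, between nodes of adjacent subcubes, paths of graph-length bounded in terms of the parameters; contracting each subcube to a single super-node (a short-circuit, hence resistance-decreasing by \Cref{rayleigh}) produces a network quasi-isometric to the grid $\{1,\dots,m\}^d$ with $m=\Theta(n^{1/d})$, yielding $\averes(\mathcal{G})\le C\,\averes(\mathrm{grid})$. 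For the lower bound I would instead delete and merge edges: the minimum node distance $s$ limits the number of nodes per subcube, and the bounded degree limits the number of edges crossing a cell boundary, so cutting along the tiling (resistance-increasing) bounds $\averes(\mathcal{G})$ below by a constant multiple of the grid value.

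It then remains to insert the classical effective-resistance asymptotics for the $d$-dimensional grid on $N=\Theta(n)$ nodes, namely $\Theta(N)$ for $d=1$, $\Theta(\log N)$ for $d=2$, and $\Theta(1)$ for $d\ge 3$---the electrical-network form of the recurrence/transience dichotomy for the simple random walk---which survive averaging over all pairs and may be cited. Combining this with the reduction of the first paragraph gives the stated bounds with $f_\mathrm{d}(n)$ as defined. I expect the geometric comparison to be the main obstacle: the delicate points are producing enough bounded-length connecting paths between tiles to apply monotonicity cleanly in the upper bound, and cutting edges in the lower bound without collapsing the coarse grid structure, all while keeping every constant independent of $n$ and expressed through $(s,r,\gamma,\rho)$, the maximum degree, $p_{\min},p_{\max}$, and $\pi_{\min}n,\pi_{\max}n$.
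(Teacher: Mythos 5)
This proposition is imported verbatim from \cite{mainp} (Theorem 4.1 there); the present paper gives no proof of it, so your sketch can only be measured against the source's argument, whose architecture you have reproduced correctly: reduce $J(P)$ to an average effective resistance via the reversible counterpart of \Cref{mainres1}, then estimate that resistance by comparing the geometric graph with a $d$-dimensional lattice through Rayleigh's monotonicity law and the classical grid asymptotics. Two concrete points, however, would fail as written. First, the algebraic reduction does not land on $C=\Phi(P)$ but on $C=\Phi(P^2)$ (the reversible case of $C_{P^*P}$), so after pinning the conductances you still must compare $\averes(\mathcal{G}(P^2))$ with $\averes(\mathcal{G}(P))$; this is the $2$-fuzz comparison that \cite{mainp} isolates as its key technical lemma (and that \Cref{bounds_GPP} generalizes here), and your sketch silently skips it.

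Second, you have the two monotonicity operations pointing the wrong way. Short-circuiting (contracting each tile to a super-node) \emph{decreases} effective resistance, so it gives $\averes(\mathcal{G})\geq \averes(\text{contracted})$ and hence the \emph{lower} bound, once you control the multiplicity of parallel edges between adjacent super-nodes using $s$ and the maximum degree. Deleting edges \emph{increases} resistance, so exhibiting a grid-like spanning subgraph (or, equivalently, routing an explicit unit flow along bounded-length tile-to-tile paths built from $r$ and $\rho$ and invoking Thomson's principle) is what yields the \emph{upper} bound $\averes(\mathcal{G})\leq C\,\averes(\text{grid})$. As proposed--contraction for the upper bound, deletion for the lower bound--each inequality comes out in the unusable direction. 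With the roles swapped and the $2$-fuzz step inserted, the sketch matches the proof in \cite{mainp}.
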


\Cref{geo} is useful when we consider a growing family of geometric graphs whose parameters $s,r,\gamma,\rho$ are bounded. However, \Cref{geo} is not for nonreversible consensus matrices.

Using \Cref{mainres2}, \Cref{geo} can be generalized as follows.

\begin{theorem}\label{maingeo}
  Let $P\in\mathbb{R}^{n\times n}$ be a consensus matrix with invariant measure $\bm{\pi}$, and let $\mathcal{G}(P)$ be the \correction{associated undirected} graph of $P$. Assume that all the nonzero entries of $P$ belong to the interval $[p_{\min},p_{\max}]$and that $\mathcal{G}(P)$ is a geometric graph with parameters $(s,r,\gamma,\rho)$ and nodes lying in $\mathcal{Q}=[0,l]^d$ in which $\gamma<l/4$. Then,
  \begin{eqnarray*}\label{geo1}
    J(P)\leq k'_2+q'_2f_\mathrm{d}(n),
  \end{eqnarray*}
  and particularly, if $P^*P=PP^*$ holds, then
  \begin{eqnarray*}\label{geo2}
    J(P)\geq k'_1+q'_1f_\mathrm{d}(n),
  \end{eqnarray*}
  where
  \[f_\mathrm{d}(n)=\begin{cases}
    n & \textrm{if }d=1,\\
    \log n & \textrm{if }d=2,\\
    1 & \textrm{if }d\geq 3,\\
  \end{cases}\]
  and where $k'_1,k'_2,q'_1$, and $q'_2$ are positive numbers which are functions of the following parameters only: the dimension $d$, the geometric parameters of the graph $(s,r,\gamma,\rho)$, the maximum degree $\delta$ of $\mathcal{G}(P)$, $p_{\min}$ and $p_{\max}$, and the products $\pi_{\min} n$ and $\pi_{\max} n$.
\end{theorem}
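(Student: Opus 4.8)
The plan is to decouple \Cref{maingeo} into two essentially independent facts and then multiply them together. The first is that, by \Cref{mainres2}, the cost $J(P)$ is sandwiched between constant multiples of the average effective resistance $\averes(\mathcal{G}(P))$ of the \emph{unweighted} associated graph. The second is that $\averes(\mathcal{G}(P))$ itself scales like $f_\mathrm{d}(n)$ whenever $\mathcal{G}(P)$ is a geometric graph with bounded parameters. The crucial point is that this second fact concerns only the unweighted graph $\mathcal{G}(P)$ and is therefore completely insensitive to whether $P$ is reversible; this is exactly what allows us to lift the reversible statement \Cref{geo} to the nonreversible setting, since \Cref{mainres2} has already absorbed all the reversibility-dependent work.

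First I would apply \Cref{mainres2} directly to $P$, which gives $J(P)\le \frac{\pi_{\max}^3 n}{p_{\min}^2\pi_{\min}^2}\averes(\mathcal{G}(P))$ unconditionally, and $J(P)\ge \frac{\pi_{\min}^3 n}{p_{\max}^2 f(\din)\pi_{\max}^2}\averes(\mathcal{G}(P))$ under the hypothesis $P^*P=PP^*$. The next step is bookkeeping on the prefactors to confirm that they depend only on the parameters permitted in the statement. Rewriting $\frac{\pi_{\max}^3 n}{\pi_{\min}^2}=\frac{(\pi_{\max}n)^3}{(\pi_{\min}n)^2}$ and $\frac{\pi_{\min}^3 n}{\pi_{\max}^2}=\frac{(\pi_{\min}n)^3}{(\pi_{\max}n)^2}$ shows that both prefactors are functions of the products $\pi_{\min}n$ and $\pi_{\max}n$ alone, together with $p_{\min},p_{\max}$ and $f(\din)$. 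Since every directed edge of $\gdir(P)$ induces an undirected edge of $\mathcal{G}(P)$, we have $\din\le\delta$ and hence $f(\din)\le f(\delta)=4\delta^2+2\delta-2$. Thus there exist constants $c_1,c_2>0$, depending only on $p_{\min},p_{\max},\delta,\pi_{\min}n,\pi_{\max}n$, such that
\[
  c_1\,\averes(\mathcal{G}(P))\ \le\ J(P)\ \le\ c_2\,\averes(\mathcal{G}(P)),
\]
where the lower bound requires $P^*P=PP^*$.

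It then remains to establish the geometric estimate $a_1+b_1 f_\mathrm{d}(n)\le \averes(\mathcal{G}(P))\le a_2+b_2 f_\mathrm{d}(n)$, with $a_i,b_i$ depending only on $d$, the geometric parameters $(s,r,\gamma,\rho)$, and the maximum degree $\delta$. This is precisely the content that drives the proof of \Cref{geo} in \cite{mainp}: there, for reversible $P$ the cost is first identified with a constant multiple of $\averes(\mathcal{G}(P))$, and the scaling in $f_\mathrm{d}(n)$ is then obtained from covering arguments on $\mathcal{Q}=[0,l]^d$ using $(s,r,\gamma,\rho)$. Because this estimate is a statement about the unweighted graph alone, I would invoke it verbatim. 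Combining it with the sandwich above yields
\[
  J(P)\ \le\ c_2\,(a_2+b_2 f_\mathrm{d}(n))\ =:\ k'_2+q'_2 f_\mathrm{d}(n),
\]
and, under $P^*P=PP^*$,
\[
  J(P)\ \ge\ c_1\,(a_1+b_1 f_\mathrm{d}(n))\ =:\ k'_1+q'_1 f_\mathrm{d}(n),
\]
with $k'_i,q'_i$ inheriting the required dependence on the parameters.

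I expect the main obstacle to be extracting the geometric estimate for $\averes(\mathcal{G}(P))$ cleanly from \cite{mainp}, where it is entangled with the reversible LQ cost rather than stated as a standalone bound on effective resistance. If a direct citation is not convenient, I would make the argument self-contained by applying \Cref{geo} to the lazy simple random walk $\tilde P$ on $\mathcal{G}(P)$, which is reversible, shares the edge set of $P$ (so $\mathcal{G}(\tilde P)=\mathcal{G}(P)$), and has nonzero entries and stationary weights bounded in terms of $\delta$. This gives $J(\tilde P)=\Theta(f_\mathrm{d}(n))$; applying \Cref{mainres2} to $\tilde P$, for which $\tilde P^*\tilde P=\tilde P\tilde P^*$ holds trivially since reversibility forces $\tilde P^*=\tilde P$, gives $J(\tilde P)=\Theta(\averes(\mathcal{G}(P)))$. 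The two together recover the needed scaling of $\averes(\mathcal{G}(P))$, completing the proof.
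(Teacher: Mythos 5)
Your proposal is correct and follows essentially the same route as the paper, which itself omits the proof and simply states that it is analogous to the geometric-graph argument of \cite{mainp} with the bounds on $J(P)$ supplied by \Cref{mainres2}; your decomposition into the sandwich $c_1\averes(\mathcal{G}(P))\le J(P)\le c_2\averes(\mathcal{G}(P))$ plus the reversibility-free scaling $\averes(\mathcal{G}(P))=\Theta(f_\mathrm{d}(n))$ is exactly that argument made explicit, and your bookkeeping of the prefactors (including $\din\le\delta$ and the rewriting in terms of $\pi_{\min}n$, $\pi_{\max}n$) is sound. Your fallback of extracting the resistance estimate by applying \Cref{geo} and \Cref{mainres2} to the lazy simple random walk on $\mathcal{G}(P)$ is a nice self-contained touch, but it does not change the substance of the approach.
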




We omit the proof because it is analogous to Section 6 in \cite{mainp} except the bounds of $J(P)$ should be evaluated by \Cref{mainres2}.
\color{black}

\Cref{maingeo} is useful for a growing family of geometric graphs $\mathcal{G}_n=(V_n,\mathcal{E}_n)$ where $V_n\subseteq [0,l_n]^d$ and with the geometric parameters $(s_n,r_n,\gamma_n,\rho_n)$ which are bounded as
\begin{eqnarray}\label{boundedparam}
  s_n\geq s,\ r_n\leq r,\ \gamma_n\leq\gamma,\ \rho_n\geq\rho.
\end{eqnarray}

This family of graphs is called \textit{a family of geometric graphs with bounded parameters}, including grid graphs with boundaries where the edges are connected only to the nearest neighbors \cite{mainp}. In this sense, this family can be understood as a generalization of the grid graph case.

Although \Cref{geo} discuss the case of fixed geometric parameters, the similar result to \Cref{geo} is also known for this family of geometric graphs with bounded parameters in \cite{mainp}. Here, we state the generalized version of it as a corollary.
\color{black}
\begin{corollary}\label{maingeo2}
  Let $P_n\in\mathbb{R}^{n\times n}$ be a family of consensus matrices with invariant measure $\bm{\pi}_n$, and let $\mathcal{G}(P_n)$ be the \correction{associated undirected} graph of $P_n$. Assume that all nonzero entries of $P_n$ belong to the interval $[p_{\min},p_{\max}]$ and all entries of $n\bm{\pi}$ belong to the interval $[\bar{\pi}_{\min},\bar{\pi}_{\max}]$ and that the family of $\mathcal{G}(P_n)$ is a family of geometric graphs with bounded parameters $(s_n,r_n,\gamma_n,\rho_n)$ and nodes lying in $\mathcal{Q}=[0,l_n]^d$ in which $\gamma_n<l_n/4$. Then,
  \begin{eqnarray*}\label{geo1asymp}
    J(P)\leq k''_2+q''_2f_\mathrm{d}(n),
  \end{eqnarray*}
  and particularly, if $P^*P=PP^*$ holds, then
  \begin{eqnarray*}\label{geo2asymp}
    J(P)\geq k''_1+q''_1f_\mathrm{d}(n),
  \end{eqnarray*}
  where $f_\mathrm{d}(n)$ is the same as in \Cref{maingeo}, and $k''_1,k''_2,q''_1$, and $q''_2$ are positive numbers which are functions of the following parameters only: the dimension $d$, the bound of geometric parameters $(s,r,\gamma,\rho)$ which satisfy \eqref{boundedparam}, $p_{\min},p_{\max},\bar{\pi}_{\min}$ and $\bar{\pi}_{\max}$.
\end{corollary}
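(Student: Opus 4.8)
The plan is to derive \Cref{maingeo2} from \Cref{maingeo} by converting the graph-dependent constants of the latter into constants that are uniform over the whole family. For each fixed $n$, the graph $\mathcal{G}(P_n)$ is itself a geometric graph with parameters $(s_n,r_n,\gamma_n,\rho_n)$ satisfying \eqref{boundedparam}, so \Cref{maingeo} applies directly and gives
\[
  J(P_n)\le k'_{2,n}+q'_{2,n}f_\mathrm{d}(n),\qquad
  J(P_n)\ge k'_{1,n}+q'_{1,n}f_\mathrm{d}(n),
\]
the lower bound under the additional hypothesis $P_n^*P_n=P_nP_n^*$. Here $k'_{i,n}$ and $q'_{i,n}$ are the explicit functions of $(d,s_n,r_n,\gamma_n,\rho_n,\delta_n,p_{\min},p_{\max},\pi_{\min}n,\pi_{\max}n)$ furnished by \Cref{maingeo}, where $\delta_n$ is the maximum degree of $\mathcal{G}(P_n)$. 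It therefore suffices to bound each of these constants uniformly in $n$: from above for the upper estimate, and away from zero for the lower estimate.

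First I would eliminate the maximum degree $\delta_n$, which appears in \Cref{maingeo} but is absent from the hypotheses of \Cref{maingeo2}. Since any two distinct nodes are at Euclidean distance at least $s_n\ge s$, while every edge joins nodes within distance $r_n\le r$, the neighbors of any node together with the node itself form a set of pairwise-$s$-separated points inside a ball of radius $r$; assigning each point a disjoint ball of radius $s/2$ and comparing volumes bounds their number by $(1+2r/s)^d$. Hence $\delta_n\le\delta:=(1+2r/s)^d$, depending only on $d,s,r$. By hypothesis the products $\pi_{\min}n$ and $\pi_{\max}n$ already lie in $[\bar{\pi}_{\min},\bar{\pi}_{\max}]$, so every argument of the constant functions other than the geometric parameters themselves is uniformly controlled.

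It remains to replace $(s_n,r_n,\gamma_n,\rho_n)$ by the bounds \eqref{boundedparam}. Observe first that these parameters are confined to bounded ranges: connectivity forces at least one edge, whose endpoints are simultaneously at distance $\ge s_n$ and $\le r_n$, so $s\le s_n\le r_n\le r$, and likewise $\gamma_n$ and $\rho_n$ are pinned between their stated bounds. Tracing the effective-resistance estimates behind \Cref{maingeo} (the argument analogous to Section 6 of \cite{mainp}, but with $J$ now bounded through \Cref{mainres2}), each constant is a monotone function of the four geometric parameters: coarsening the geometry—larger spacing $s$, shorter range $r$, larger uncovered radius $\gamma$, or smaller distance ratio $\rho$—can only degrade connectivity and raise the upper effective-resistance estimate, while the opposite moves govern the lower one. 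Evaluating at the extreme values $(s,r,\gamma,\rho)$ therefore dominates, respectively bounds below, every $k'_{i,n},q'_{i,n}$, producing $n$-independent constants $k''_1,k''_2,q''_1,q''_2$ that depend only on $(d,s,r,\gamma,\rho,p_{\min},p_{\max},\bar{\pi}_{\min},\bar{\pi}_{\max})$.

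The main obstacle is precisely the monotonicity bookkeeping in the last step: one must inspect how each of $k'_i,q'_i$ depends on $(s,r,\gamma,\rho)$ and confirm the correct direction separately for the upper and lower estimates. In \cite{mainp} this is carried out in the reversible setting, and the only genuinely new point here is that the LQ cost is controlled via \Cref{mainres2} rather than the reversible formula. However, since \Cref{mainres2} already expresses both bounds through $\averes(\mathcal{G}(P_n))$ together with the same scalar parameters $p_{\min},p_{\max},\pi_{\min},\pi_{\max}$ and $\din$, the dependence on the geometric parameters—and hence its monotonicity—is inherited from the reversible analysis unchanged, so the substitution goes through exactly as before.
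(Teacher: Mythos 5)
Your proposal is correct and follows essentially the same route as the paper, which simply states that the corollary follows by combining \Cref{maingeo} with the bounded-parameter discussion in Section 4.3 of \cite{mainp}. Your write-up usefully makes explicit the two points that reference leaves implicit---the volume-packing bound $\delta_n\le(1+2r/s)^d$ eliminating the maximum degree, and the monotone substitution of $(s_n,r_n,\gamma_n,\rho_n)$ by the uniform bounds---but this is a fleshing-out of the same argument, not a different one.
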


\correction{
The proof can be done by combining the discussion of \Cref{maingeo} and Section 4.3 in \cite{mainp}.
}

\begin{remark}\label{growthln}
  For a family of geometric graphs with bounded parameters, $l_n$ must grow linearly to $n^{1/d}$ in $n\to\infty$ (Lemma 6.3 in \cite{mainp}). This feature is used for setting of $l_n$ in the following numerical experiment.
\end{remark}

Based on the above discussion, we check the behavior of $J(P)$ on a family of geometric graphs with bounded parameters by numerical experiment. The dimension $d$ is set to $2$ or $3$. The experiment is conducted in the following way:
\begin{itemize}
  \item \textbf{Construct a geometric graph}
  \begin{itemize}
    \item We fix the dimension $d$, the geometric parameters $(s,r,\gamma,\rho)$, an edge making probability $p_{\mathrm{e}}$, an edge direction probability $p_{\mathrm{d}}$, and the number of nodes $n$.
    \item We fix a constant $c$ and set $l_n:=cn^{1/d}$.
    \item We choose $n$ nodes in $[0,l_n]^d$ in the following way:
    \begin{enumerate}
      \item We choose the first node in $[0,l_n]^d$ according to a uniform distribution.
      \item We repeat selecting the next node in $[0,l_n]^d$ according to a uniform distribution independently to the previous nodes. If the Euclidean distance between the picked node and any previous node is less than $s$, then we discard this node. We stop repeating when the number of accepted nodes becomes $n$.
    \end{enumerate}
    \item We construct the edges in the following way: For all pairs of nodes $\{u,v\}$, if the distance between $u$ and $v$ is greater than $r$, we do not draw any edge between them. Otherwise, we draw an edge between $u$ and $v$ with probability $p_{\mathrm{e}}$ (independently to other edges).
    \item If the graph is not connected, we discard it.
    \item We calculate $\gamma_n$ and $\rho_n$, and if $\gamma_n>\gamma$ or $\rho_n<\rho$, we discard the graph. The method of judgment of $\gamma_n>\gamma$ and $\rho_n<\rho$ is in Appendix.
  \end{itemize}
  \item \textbf{Construct a consensus matrix}
  \begin{itemize}
    \item We fix the constants $b$, $\bar{\pi}_{\min}$, and $\bar{\pi}_{\max}$.
    \item We construct a matrix $P_n\in\mathbb{R}^{n\times n}$, where $P_{uv}=1$ if $u$ and $v$ are connected by an edge or $u=v$, and otherwise $P_{uv}=0$. This is a symmetric matrix and not a stochastic matrix.
    \item For all undirected edges $\{u,v\}$, we change $P_{uv}$ to $0$ with probability $p_{\mathrm{d}}$, or $P_{vu}$ to $0$ with probability $p_{\mathrm{d}}$, or do nothing to $P_{uv}$ and $P_{vu}$ with the probability $1-2p_{\mathrm{d}}$.
    \item If $P$ is not irreducible, we discard $P$ and return to ``construct a geometric graph'' step.
    \item We change all nonzero entries in $P_n$ to random values chosen by a uniform distribution on $[b,1]$. Then, we normalize each row of $P_n$ so that $P_n$ becomes a stochastic matrix. By this process, the elements of $P_n$ are guaranteed to be greater than or equal to $\frac{b}{b+\delta}$.
    \item We calculate the invariant measure $\bm{\pi}$ of $P$. If $n\pi_{\min}<\bar{\pi}_{\min}$ or $n\pi_{\max}>\bar{\pi}_{\min}$, we discard $P$ and return to ``construct a geometric graph'' step.
  \end{itemize}
  \item \textbf{Calculate $J(P)$}
  \begin{itemize}
    \item For $P$ constructed by ``construct a consensus matrix'' step, we calculate $J(P)$ by \eqref{lqcost}. The summation ends at $t=10^4$, or the first point where the change of sum is less than $10^{-5}$ for $10$ consective $t$.
  \end{itemize}
\end{itemize}

\begin{table}[t]
  \caption{$n$ used in the experiment}
  \label{table:expn}
  \centering
    \begin{tabular}{c|l}
    \hline
    $d$ & $n$ \\
    \hline
    2 & 25, 50, 75, 100, 125, 150, 175, 200, 225, 250, 275, 300 \\
    3 & 50, 150, 250, 350, 450, 550, 600, 650, 700, 750, 800 \\
    \hline
    \end{tabular}
\end{table}
\begin{table}[t]
  \caption{Parameters used in the experiment}
  \label{table:exprm}
  \centering
    \begin{tabular}{cccccccccc}
    \hline
    $s$ & $r$ & $\gamma$ & $\rho$ & $p_{\mathrm{e}}$ & $p_{\mathrm{d}}$ & $c$ & $b$ & $\bar{\pi}_{\min}$ & $\bar{\pi}_{\max}$ \\
    \hline \hline
    0.1 & 1.0 & 1.0 & 0.052 & 0.8 & 0.1 & 0.5 & 0.8 & 0.1 & 3.0\\
    \hline
    \end{tabular}
\end{table}

\begin{figure}[htbp]
  \begin{minipage}[htbp]{0.48\linewidth}
    \centering
    \includegraphics[keepaspectratio, scale=0.38]{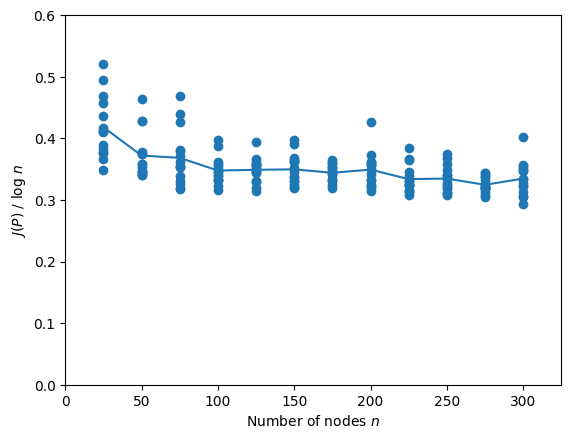}
  \end{minipage}
  \begin{minipage}[htbp]{0.48\linewidth}
    \centering
    \includegraphics[keepaspectratio, scale=0.38]{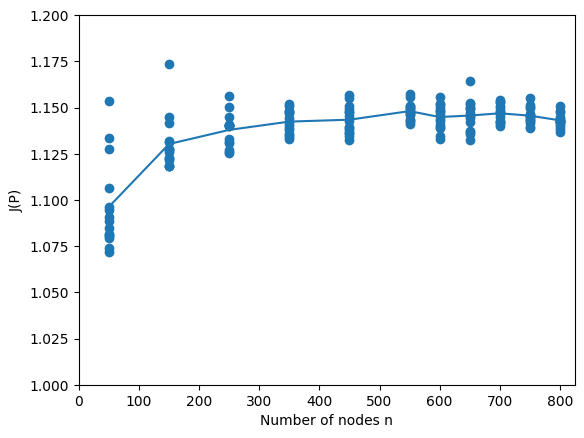}
  \end{minipage}
  \caption{The values of $J(P)/\log n$ in $d=2$ (left panel), and \correction{$J(P)$} in $d=3$ (right panel). The solid lines in both panels are the average of $J(P)$ (divided by $\log n$ in the left panel).}
  \label{figure:expres}
\end{figure}

We have run the construction and calculation for $d=2$ and $d=3$. In both cases, we have constructed $15$ consensus matrices for each $n$ listed in Table \ref{table:expn}\@. We also show the parameters used for the experiment in Table \ref{table:exprm} (common in $d=2$ and $d=3$).

Notice that we only specify $\gamma_n$ as equal or less than $\gamma$, so $\gamma_n$ can be larger than $l_n/4$ if $\gamma>l_n/4$. In fact, when $n$ is small, the constraint $\gamma_n<l_n/4$ can be violated. Particularly, in our parameter setting, the results in $n\leq 64$ of $d=2$ case and $n\leq 512$ of $d=3$ case are not guaranteed to obey \Cref{maingeo2}.

The result is shown in \Cref{figure:expres}. In the figure of $d=2$, the vertical axis shows $J(P)$ divided by $\log n$. The results suggest that the asymptotic growth of $J(P)$ is bounded as \Cref{maingeo2}. In fact, in small $n$, it seems that there is a decreasing feature in $d=2$ and an increasing feature in $d=3$, but in larger $n$ which satisfies $\gamma<l_n/4$, the asymptotic behavior predicted in \Cref{maingeo2} can be read. In addition, the lower bound for $J(P)$ is not guaranteed since our experiment does not assume $P^*P=PP^*$, but in the results, the value of $J(P)$ appears not to become much smaller. There may be some lower bound that holds for a random geometric graph with high probability.

\section{Conclusion}
\label{sec:conclusion}

In this paper, we presented an estimation of the LQ cost for the linear consensus algorithm applied to nonreversible matrices. Our approach leverages the reversible matrix $P^*P$, which remains reversible even when $P$ is not, to derive performance bounds using effective resistance. We further introduced novel concepts---the \textit{back-and-forth path} and \textit{pivot node}---to establish a relationship between the effective resistances of $\mathcal{G}(P)$ and $\mathcal{G}(P^*P)$. An application to geometric graphs was also demonstrated, underscoring the practical relevance of our results.

We believe that the methodology based on $P^*P$ can be extended to existing research on reversible cases. In reversible settings, since $P^*P = P^2$, studies that employ $P^2$ can be adapted to nonreversible cases using $P^*P$ instead. Future work may focus on establishing a more refined lower bound for the LQ cost in nonreversible cases, under assumptions that are less restrictive than those used in our current analysis, by leveraging effective resistance or alternative analytical tools.

\bibliographystyle{siamplain}
\bibliography{all}

\appendix

\section{Calculation of $\gamma_n$ and $\rho_n$ in the numerical experiment}
In the numerical experiment in Section \ref{subsection_geometric}, we have to judge whether $\gamma_n>\gamma$, and whether $\rho_n<\rho$. This condition is checked in the following way.
\begin{itemize}
  \item $\gamma_n>\gamma$ \\
  We take all points in $\mathcal{Q}=[0,l]^d$ so that each coordinate is an integer multiple of $\tilde{l}:=l/30$. Notice that the number of points is $31^d$. For all of these $31^d$ points, we check if they are more than $\gamma-\tilde{l}\sqrt{d}/2$ away from all the points in $V$. If not, it is guaranteed that all the points in $\mathcal{Q}$ are within distance $\gamma$ of any point in $V$, so we judge that $\gamma_n\leq\gamma$. Otherwise, there may be a point further than $\gamma$ from all points in $V$, so we judge that $\gamma_n>\gamma$ and discard the constructed graph. It is possible to be discarded even if $\gamma_n\leq\gamma$, but if the graph is not discarded, then $\gamma_n\leq\gamma$ necessarily holds.
  \item $\rho_n<\rho$ \\
  We use Floyd-Warshall (see Section 23.2 in \cite{cormen2022introduction}) method to calculate the distance between all pairs of points. Then, we calculate $\rho_n$ and judge whether $\rho_n<\rho$.
\end{itemize}

\end{document}